\newcommand{\GF}{{\mathbb F}}
\newcommand{\FF}{{\mathbb F}}
\newcommand{\R}{{\mathbb R}}
\newcommand{\RR}{{\mathbb R}}
\newcommand{\NN}{{\mathbb N}}
\newcommand{\PAut}{{\rm PAut}}
\newcommand{\wt}{{\rm wt}}
\newcommand{\supp}{{\rm supp}}
\DeclareMathOperator{\Harm}{Harm}
\newtheorem{thm}{Theorem}[section]
\newtheorem{lem}[thm]{Lemma}
\newtheorem{cor}[thm]{Corollary}
\newtheorem{prop}[thm]{Proposition}
\theoremstyle{definition}
\newtheorem{df}[thm]{Definition}
\newtheorem{rem}[thm]{Remark}
\newtheorem{ex}[thm]{Example}
\numberwithin{equation}{section}
\title[$3$-designs in the extended quadratic residue code]
{Infinite series of $3$-designs in the extended quadratic residue code}
\author[Madoka Awada]{Madoka Awada*}
\thanks{*Corresponding author}
\address{		Graduate School of Science and Engineering, 
		Waseda University, 
		Tokyo 169--8555, Japan
}
\email{madoka-awada@fuji.waseda.jp} 
\keywords{extended quadratic residue code, 
$t$-design, 
Jacobi polynomial, harmonic weight enumerator
}
\subjclass[2010]{Primary 94B05; Secondary 05B05}
\begin{document}
\begin{abstract}
In this paper, 
we show infinite series of $3$-designs in the extended quadratic residue codes over $\FF_{r^2}$ for a prime $r$.
\color{black}
\end{abstract}
\maketitle

\vspace{-7mm}

\section{Introduction}

Let $C$ be a code and 
$C_\ell:=\{c\in C\mid \wt(c)=\ell\}$. 
In this paper, whenever $C_\ell$ is non-empty, we call it a shell of code $C$. 

Let $q$ be a prime power, and 
let 
$p$ be an odd prime where $q$ is a quadratic residue modulo $p$ and is 
coprime with $p$. 
Let $Q_{q,p}$ be the quadratic residue code of length $p$ over $\FF_q$, and 
let $\widetilde{Q}_{q,p+1}$ be the extended quadratic residue code of
length $p+1$ over $\FF_q$. 
By the transitivity argument, 
shells of $\widetilde{Q}_{q,p+1}$ are known to support 
$2$-designs if $p \equiv 1\pmod{4}$ and 
$3$-designs if $p \equiv -1\pmod{4}$ (see Example~\ref{ex:AutQR}).
In the recent years, Bonnecaze--Sol\'{e} \cite{BS} found a $3$-design 
in $(\widetilde{Q}_{2,42})_{\ell}$ 
and Ishikawa \cite{Ishikawa} also found $3$-designs 
in $(\widetilde{Q}_{3,14})_{\ell}$ and $(\widetilde{Q}_{4,18})_{\ell}$.
Since they are the cases where $p \equiv 1\pmod{4}$, they do not follow from the transitivity argument.
We mention that the results of them are only for some special $\ell$ 
while the transitivity argument give results for all $\ell$.

In this paper, we show infinite series of $3$-designs in $\widetilde{Q}_{r^2,p+1}$  
that do not follow from the transitivity argument like the above three examples.
Many of them also do not follow from the Assmus--Mattson Theorem (see Theorem~\ref{thm: AM}). 
The main result of this paper is as follows.
%

\begin{thm}\label{thm:newmain}
Let $p$ be an odd prime. 
Let $\widetilde{Q}_{{r^2},p+1}$ be the extended quadratic residue code of length $p+1$ over $\FF_{r^2}$. 
If 
$p \equiv1 \pmod{4}$ and $r$ is not a quadratic residue modulo $p$, 
then, for $\ell\in \NN$, ${(\widetilde{Q}_{{r^2},p+1})}_\ell$ is a $3$-design 
whenever $(\widetilde{Q}_{{r^2},p+1})_\ell$ 
is non-empty. 
\end{thm}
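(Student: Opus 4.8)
The plan is to run the harmonic‑weight‑enumerator argument for the supports of codewords. For a code $C$ of length $n$ and $f\in\Harm_k$ on the coordinate set, write $W_{C,f}(x,y)=\sum_{c\in C}f(\supp(c))\,x^{\,n-\wt(c)}\,y^{\,\wt(c)}$; a non‑empty shell $C_\ell$ supports a $t$‑design once the coefficient of $x^{\,n-\ell}y^{\,\ell}$ in $W_{C,f}$ vanishes for every $f\in\Harm_k$ with $1\le k\le t$. The map $f\mapsto W_{C,f}$ is linear and, by re‑indexing the sum over $C$, takes the same value on $f$ and on $\sigma\cdot f$ whenever $\sigma$ is the permutation part of a monomial or of a semilinear automorphism of $C$; hence it factors through the $\sigma$‑invariant harmonic functions. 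So it is enough to show $W_{C,f}=0$ for all $f\in\Harm_k$, $k=1,2,3$, where $C=\widetilde{Q}_{r^2,p+1}$ and the coordinate set is the projective line $\mathbb{P}^1(\FF_p)$ on $p+1$ points.

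Degrees $1$ and $2$ are exactly the classical transitivity argument of Example~\ref{ex:AutQR}: by Gleason--Prange, the automorphism group of $\widetilde{Q}_{r^2,p+1}$ has permutation part containing $\mathrm{PSL}_2(p)$, which is $2$‑transitive on $\mathbb{P}^1(\FF_p)$, and a $2$‑transitive group has no non‑zero invariant element of $\Harm_1$ or $\Harm_2$. Thus the whole of Theorem~\ref{thm:newmain} reduces to the degree‑$3$ case.

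For degree $3$ I first locate the invariant space. The six fractional‑linear maps fixing a given unordered triple of $\mathbb{P}^1(\FF_p)$ form an $S_3$ represented by matrices of determinant $1$ or $-1$; since $p\equiv 1\pmod 4$, $-1$ is a square, so all six lie in $\mathrm{PSL}_2(p)$. Hence $\mathrm{PSL}_2(p)$ has two orbits on $3$‑subsets but only one on $2$‑subsets, and the space of $\mathrm{PSL}_2(p)$‑invariant elements of $\Harm_3$ is one‑dimensional, say $\CC f_3$. Now the field hypothesis enters: since $q=r^2$ is a square the quadratic residue codes over $\FF_{r^2}$ exist, and since $r$ is a non‑residue mod $p$, multiplication by $r$ carries the residues onto the non‑residues; therefore the coordinatewise Frobenius $\Phi\colon x\mapsto x^{r}$ of $\FF_{r^2}$ interchanges the two extended quadratic residue codes, while the coordinate permutation $\mu_r\in\mathrm{PGL}_2(p)$, $[x:y]\mapsto[rx:y]$ fixing the adjoined point, interchanges them back. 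Hence $\psi:=\mu_r\circ\Phi$ is a semilinear automorphism of $\widetilde{Q}_{r^2,p+1}$ whose permutation part is $\mu_r$, so $W_{C,f}=W_{C,\mu_r^{-1}\cdot f}$ for all harmonic $f$.

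It remains to determine the sign by which $\mu_r$ acts on $f_3$. Since $\mu_r$ normalises $\mathrm{PSL}_2(p)$ it preserves the line $\CC f_3$, and since $\mu_r^2=\mu_{r^2}$ has square determinant, $\mu_r^2\in\mathrm{PSL}_2(p)$ fixes $f_3$; so $\mu_r\cdot f_3=\pm f_3$. The sign cannot be $+$: otherwise $f_3$ would be invariant under $\langle\mathrm{PSL}_2(p),\mu_r\rangle=\mathrm{PGL}_2(p)$, which is sharply $3$‑transitive and so has no non‑zero invariant element of $\Harm_3$. Thus $\mu_r\cdot f_3=-f_3$, whence $W_{C,f_3}=W_{C,-f_3}=-W_{C,f_3}$, i.e.\ $W_{C,f_3}=0$; together with degrees $1$ and $2$, this shows every non‑empty shell of $\widetilde{Q}_{r^2,p+1}$ is a $3$‑design, which is Theorem~\ref{thm:newmain}. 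I expect the real work to be not this last computation but the set‑up: putting the harmonic‑weight‑enumerator (equivalently, Jacobi‑polynomial) formalism for $\FF_{r^2}$‑codes on a firm footing, so that ``the supports in a shell form a $t$‑design'' is literally the stated vanishing, and tracking the adjoined coordinate together with the \emph{monomial}, not merely permutational, Gleason--Prange action carefully enough to be sure that $\Phi$ really swaps the two extended quadratic residue codes and that $\psi$ is an honest automorphism of the fixed one. Small cases such as $p=5$, $r=2$ (the hexacode), and the shells already covered by Theorem~\ref{thm: AM}, provide checks.
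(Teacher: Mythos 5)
Your proposal is correct, but it follows a genuinely different route from the paper. The paper never constructs your semilinear automorphism: it first proves (Lemma~\ref{lem:main}, Corollary~\ref{cor:sub}) that the union $(\widetilde{Q}_{q,p+1})_\ell\cup(\widetilde{Q}_{q,p+1}^{\perp,\ast})_\ell$ is always a $3$-design, using isoduality of the extended QR code and the two $\mathrm{PSL}_2(p)$-orbits on triples being swapped by the equivalence; then it computes $\widetilde{Q}_{r^2,p+1}\cap\widetilde{Q}_{r^2,p+1}^{\perp,H}$ (Proposition~\ref{prop:intersect}, via Lemma~\ref{lem: duadicdual} and the multiplier $\mu_{-r}$), and under the hypotheses ($-r$ a non-residue) shows either Hermitian self-duality or that $\widetilde{Q}_{r^2,p+1}$ and its Hermitian dual differ only by rescaling the $\infty$-coordinate, so their shells have identical support sets and the design property of the union transfers to the code alone. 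You instead work directly with the code: the hypothesis that $r$ is a non-residue makes the coordinatewise Frobenius swap the two odd-like QR codes, so $\mu_r\circ\Phi$ is a weight- and support-preserving semilinear automorphism of $\widetilde{Q}_{r^2,p+1}$ whose permutation part $\mu_r$ lies in $\mathrm{PGL}_2(p)\setminus\mathrm{PSL}_2(p)$, acts by $-1$ on the one-dimensional space $\Harm_3^{\mathrm{PSL}_2(p)}$ (using $p\equiv1\pmod 4$ exactly as in Example~\ref{ex:AutQR}), and hence forces $w_{C,f_3}=0$; by Theorem~\ref{thm:design-aut}/Remark~\ref{rem:hom-2} (note you only need, and only prove, vanishing for \emph{invariant} $f$, which suffices by averaging) every non-empty shell is a $3$-design. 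Your route is more self-contained: it avoids the dual-code machinery and, notably, the case split on $p\bmod r$ that the paper needs because of the matrix $M=\mathrm{diag}(1,\ldots,1,-1/p)$. The paper's route buys more by-products: Corollary~\ref{cor:sub} holds for every prime power $q$ and both inner products, and Proposition~\ref{prop:intersect}/Remark~\ref{rem:intersect} give the intersection structure, including when $\widetilde{Q}_{r^2,p+1}$ is Hermitian self-dual. The remaining work you flag (that $\Phi$ really swaps the two QR codes when $r$ is a non-residue, that $\mu_r\circ\Phi$ respects the extension, and the convention for which multiplier direction returns you to the original code) is routine and does not affect validity; the same underlying inputs (two $\mathrm{PSL}_2(p)$-orbits on triples, $r\in N$) appear in both proofs, just packaged through Frobenius in yours and through the Hermitian dual in the paper's.
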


\begin{cor}\label{cor:new}
Regarding Theorem~\ref{thm:newmain}, 
if $r=2$ or $r=3$, the following statements hold:
\begin{enumerate}
\item [(1)]
If $p\equiv -3\pmod{8}$, 
then, for $\ell\in \NN$, ${(\widetilde{Q}_{4,p+1})}_\ell$ is a $3$-design 
whenever $(\widetilde{Q}_{4,p+1})_\ell$ is non-empty.
\item [(2)]
If 
$p\equiv 5\pmod{12}$, 
then, for $\ell\in \NN$, ${(\widetilde{Q}_{9,p+1})}_\ell$ is a $3$-design 
whenever $(\widetilde{Q}_{9,p+1})_\ell$ is non-empty.
\end{enumerate}
\end{cor}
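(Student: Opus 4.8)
The plan is to derive both statements directly from Theorem~\ref{thm:newmain} by verifying, in each case, that the stated congruence on $p$ forces the two hypotheses of that theorem: namely $p \equiv 1 \pmod 4$ and that $r$ is a quadratic non-residue modulo $p$. No further coding theory is involved; the work is a short application of quadratic reciprocity together with the second supplementary law for $\left(\frac{2}{p}\right)$.

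For part~(1), I would start from $p \equiv -3 \pmod 8$, i.e. $p \equiv 5 \pmod 8$. This immediately gives $p \equiv 1 \pmod 4$. By the second supplement to quadratic reciprocity, $2$ is a quadratic residue modulo $p$ if and only if $p \equiv \pm 1 \pmod 8$; since $p \equiv 5 \pmod 8$, the element $2$ is a non-residue modulo $p$. Thus the hypotheses of Theorem~\ref{thm:newmain} hold with $r = 2$, and the conclusion for $(\widetilde{Q}_{4,p+1})_\ell$ follows.

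For part~(2), I would start from $p \equiv 5 \pmod{12}$, which gives both $p \equiv 1 \pmod 4$ (as $5 \equiv 1 \pmod 4$) and $p \equiv 2 \pmod 3$. Applying quadratic reciprocity in the form $\left(\frac{3}{p}\right)\left(\frac{p}{3}\right) = (-1)^{\frac{p-1}{2}}$ and using that $p \equiv 1 \pmod 4$ makes $\frac{p-1}{2}$ even, one obtains $\left(\frac{3}{p}\right) = \left(\frac{p}{3}\right)$. Since $p \equiv 2 \pmod 3$ and $2$ is a non-residue modulo $3$, we get $\left(\frac{p}{3}\right) = -1$, hence $3$ is a non-residue modulo $p$. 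Again Theorem~\ref{thm:newmain} applies, now with $r = 3$, yielding the claim for $(\widetilde{Q}_{9,p+1})_\ell$.

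Because the argument is just a chain of standard congruence computations, I do not expect any real obstacle; the only point requiring care is bookkeeping the Legendre-symbol signs correctly in part~(2), in particular keeping $\left(\frac{r}{p}\right)$ and $\left(\frac{p}{r}\right)$ distinct when invoking reciprocity.
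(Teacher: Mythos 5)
Your proposal is correct and matches the paper's proof: both parts are obtained by checking that the congruence on $p$ gives $p\equiv 1\pmod 4$ and that $r$ is a quadratic non-residue modulo $p$, then invoking Theorem~\ref{thm:newmain}. The only cosmetic difference is that the paper cites standard lemmas from Huffman--Pless for the non-residuacity of $2$ and $3$, whereas you rederive them from the second supplementary law and quadratic reciprocity, which is equivalent.
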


\color{black}
This paper is organized as follows. 
In Section~\ref{sec:pre}, 
we give definitions and some basic properties of the 
codes and $t$-designs,  
Jacobi polynomials, and harmonic weight enumerators 
used in this paper. 
In Section~\ref{sec:prel}, 
we show 
preliminary steps of proving Theorem~\ref{thm:newmain}.
In Section~\ref{sec:sub}, 
we give a proof of 
Theorem~\ref{thm:newmain}. 
Finally, in Section~\ref{sec:rem}, 
we show some examples and 
infinite series of 
$3$-designs in $\widetilde{Q}_{r^2,p+1}$. 
\color{black}
All computations presented in this paper were done using 
{\sc Magma}~\cite{Magma} and {\sc Mathematica}~\cite{Mathematica}.

\section{Preliminaries}\label{sec:pre}

\subsection{Codes and $t$-designs}

A linear code $C$ of length $n$ over $\FF_q$ is a linear subspace of $\FF_{q}^{n}$. 
The Euclidean inner product $({x},{y})_E$ on $\FF_q^n$ is given 
by
\[
(x,y)_E=\sum_{i=1}^nx_iy_i,
\]
where $x,y\in \FF_q^n$ with $x=(x_1,x_2,\ldots, x_n)$ and 
$y=(y_1,y_2,\ldots, y_n)$. 
The Euclidean dual of a linear code $C$ is defined as follows: 
\[
C^{\perp,E}=\{{y}\in \FF_{q}^{n}\mid ({x},{y})_E =0 \text{ for all }{x}\in C\}. 
\]
A linear code $C$ is called Euclidean self-dual if $C=C^{\perp,E}$.

In the study of linear codes over $\FF_{r^2}$ 
for a prime $r$, 
we often use another inner product called the Hermitian inner product. 
The Hermitian inner product $(x,y)_H$ on $\FF_{r^2}^n$ is given 
by
\[
(x,y)_H 
= \sum_{i=1}^nx_i{y_i}^r,
\]
where 
$x,y\in \FF_{r^2}^n$ with $x=(x_1,x_2,\ldots, x_n)$ and $y=(y_1,y_2,\ldots, y_n)$. 
The Hermitian dual of a linear code $C$ over $\FF_{r^2}$ is defined as follows: 
\[
C^{\perp,H}=\{{y}\in \FF_{r^2}^n \mid ({x},{y})_H =0 \text{ for all }{x}\in C\}. 
\]
A linear code $C$ is called $C$ Hermitian self-dual if $C=C^{\perp,H}$. 
For details of the Hermitian inner product, see \cite{HP}. 
For $x \in\FF_q^n$,
the weight $\wt(x)$ is the number of its nonzero components. 
\color{black}

Let $C$ be a linear code of length $n$ over $\FF_q$.  
We consider creating longer codes by adding a new coordinate. 
The extended code $\widetilde{C}$ is defined as follows:
\[
\widetilde{C}
=\{(c_1,\ldots,c_n,c_{n+1})\in \FF_q^{n+1}\mid (c_1,\ldots,c_n)\in C,
\sum_{i=1}^{n+1} c_i=0\}. 
\]
We label the new coordinate $\infty$ and 
assume that the coordinates of 
$\widetilde{C}$ 
are labeled as $\{0,1,\ldots,n-1,\infty\}$.

Let $C$ be a cyclic code of length $n$ over $\FF_q$. 
That is, if $(c_1,c_2,\ldots,c_n)\in C$, then 
$(c_{n},c_1,\ldots,c_{n-1})\in C$. 
Then 
$C$ corresponds to an ideal $(g(x))$ of 
\[
\FF_q[x]/(x^n-1). 
\]
We call $g(x)$ a generator polynomial of $C$. 
Let $p$ be an odd prime where $q$ is a quadratic residue modulo $p$. 
The quadratic residue code $Q_{q,p}$ over $\FF_q$ is 
a cyclic code of length $p$, 
which is generated by 
\[
\prod_{\ell \in (\FF_p^\ast)^2}(x-\alpha^\ell), 
\]
where $(\FF_p^\ast)^2=\{\ell^2\mid \ell\in \FF_p^\ast\}$ and $\alpha$ is a primitive root of order $p$. 
We note that quadratic residue codes over $\FF_{r^2}$ exist for any odd prime length greater than $r$, 
as $r^2$ is obviously a quadratic residue modulo any odd prime $p$. 
For details of the quadratic residue codes, 
see \cite{{DC},{L}}. 

The quadratic residue code is a special case of duadic code.
Before showing the definition of duadic code, we recall the concepts of even-like and odd-like.
A vector $x=(x_1,x_2,\ldots, x_n) \in \FF_q^{n}$ is even-like if 
$\sum_{i=1}^n x_i =0$ and is odd-like otherwise. In addition, we call a code even-like if it has only even-like codewords and odd-like if it is not even-like.
For details of even-like and odd-like, see \cite{HP}. 
Duadic code is defined as follows:
\begin{df}[{{\cite[Definition 4.1]{DMS}}}]\label{ex:DDC}
Let $e_1(x)$ and $e_2(x)$ be a pair of even-like idempotents and let $C_1= \langle e_1(x) \rangle$ and $C_2= \langle e_2(x) \rangle$.
The codes $C_1$ and $C_2$ form a pair of even-like duadic codes of length $n$ over $\FF_q$ if the
following properties are satisfied:
\begin{enumerate}
\item [(a)]
the idempotents satisfy
$e_1(x)+e_2(x) = 1 - \bar{j}(x)$,
\item [(b)]
there is a multiplier $\mu_a$ such that 
$C_1 \mu_a = C_2$ and $C_2 \mu_a = C_1$,
\end{enumerate}
where $\bar{j}(x)=\frac{1}{n} (1+x+x^2+ \cdots +x^{n-1})$, and 
$\mu_a$ is a function on $\{0,1,2,\cdots,n-1\}$ by $i\mu_a \equiv ia\pmod{n}$.
The pair of even-like codes $C_1$ and $C_2$ is associated with a pair of odd-like duadic codes $D_1= \langle 1-e_2(x) \rangle$ and $D_2= \langle 1-e_1(x) \rangle$.
We say that the multiplier $\mu_a$ gives a splitting for
the even-like duadic codes or for the odd-like duadic codes.
\end{df}

\begin{lem}[{{\cite[Theorem 6.1.3 (ii),(ix)]{HP}}}]
\label{lem: cplus1}
Let $C_1=\langle e_1(x) \rangle$ and $C_2=\langle e_2(x) \rangle$ be a pair of even-like 
duadic codes of length $n$ over $\FF_q$. 
Let $D_1$ and $D_2$ be the associated odd-like duadic codes. 
Then, 
\begin{itemize}
  \item[(1)]
  $C_1 \cap C_2 = \{0\}. $
  \item[(2)] 
$D_i = C_i + \langle \bar{j}(x) \rangle 
=  \langle \bar{j}(x) + e_i(x) \rangle \,\,\text{for}\,\, i=1,2. $
  \end{itemize}
\end{lem}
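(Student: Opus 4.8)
The plan is to carry out everything inside the group algebra $R=\FF_q[x]/(x^n-1)$ by purely formal manipulation of idempotents; the multiplier condition (b) of Definition~\ref{ex:DDC} is not needed here, only the idempotent identity (a) together with the structure of $\bar j(x)$. The two preliminary facts I would establish are that $\bar j(x)$ is itself an idempotent of $R$ and that every even-like element of $R$ annihilates $\bar j(x)$. Since $x^k\bar j(x)=\bar j(x)$ for every $k$, one gets $f(x)\bar j(x)=f(1)\bar j(x)$ for all $f(x)\in R$; applying this to $f(x)=\frac1n\sum_{i=0}^{n-1}x^i$ and using the standing assumption $\gcd(n,q)=1$ (so that $\tfrac1n$ is defined and $f(1)=1$) gives $\bar j(x)^2=\bar j(x)$. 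The same identity shows that if $f$ is even-like, i.e.\ $f(1)=0$, then $f(x)\bar j(x)=0$; in particular $e_1(x)\bar j(x)=e_2(x)\bar j(x)=0$, as $e_1,e_2$ are even-like idempotents.

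Next I would derive the orthogonality $e_1(x)e_2(x)=0$: multiplying the identity $e_1(x)+e_2(x)=1-\bar j(x)$ by $e_1(x)$ and using $e_1(x)^2=e_1(x)$ together with $e_1(x)\bar j(x)=0$ yields $e_1(x)+e_1(x)e_2(x)=e_1(x)$, hence $e_1(x)e_2(x)=0$. Part~(1) is then immediate: if $v\in C_1\cap C_2$, then $v=v\,e_1(x)$ because $e_1(x)$ is the identity of the ideal $\langle e_1(x)\rangle$, and likewise $v=v\,e_2(x)$; therefore $v=v\,e_1(x)=\big(v\,e_2(x)\big)e_1(x)=v\big(e_1(x)e_2(x)\big)=0$.

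For part~(2), note first that the identity (a) rearranges to $1-e_2(x)=e_1(x)+\bar j(x)$ and $1-e_1(x)=e_2(x)+\bar j(x)$, so by the definitions $D_1=\langle 1-e_2(x)\rangle$ and $D_2=\langle 1-e_1(x)\rangle$ we already have $D_i=\langle e_i(x)+\bar j(x)\rangle$ for $i=1,2$. It remains to identify $\langle e_i(x)+\bar j(x)\rangle$ with $C_i+\langle\bar j(x)\rangle$. One inclusion is clear. For the other, observe that $e_i(x)+\bar j(x)$ is idempotent, since $\big(e_i(x)+\bar j(x)\big)^2=e_i(x)^2+2e_i(x)\bar j(x)+\bar j(x)^2=e_i(x)+\bar j(x)$ by the idempotencies and $e_i(x)\bar j(x)=0$; moreover $\big(e_i(x)+\bar j(x)\big)e_i(x)=e_i(x)$ and $\big(e_i(x)+\bar j(x)\big)\bar j(x)=\bar j(x)$, so both $e_i(x)$ and $\bar j(x)$ lie in $\langle e_i(x)+\bar j(x)\rangle$, giving $C_i+\langle\bar j(x)\rangle=\langle e_i(x)\rangle+\langle\bar j(x)\rangle\subseteq\langle e_i(x)+\bar j(x)\rangle$. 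Hence $D_i=C_i+\langle\bar j(x)\rangle=\langle e_i(x)+\bar j(x)\rangle$.

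The argument is elementary and I do not expect a genuine obstacle; the one point requiring care is the standing hypothesis $\gcd(n,q)=1$, which is exactly what makes $\bar j(x)$ a well-defined idempotent and is used silently throughout --- without it neither the scalar $\tfrac1n$ nor $\bar j(x)^2=\bar j(x)$ is available. The only other thing to be careful about is the bookkeeping of translating ``even-like'' into the ring condition $f(1)=0$ and remembering that each $e_i(x)$ acts as the identity on its own ideal $\langle e_i(x)\rangle$; with those translations fixed, both parts reduce to one-line idempotent computations.
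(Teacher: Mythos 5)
Your proof is correct: the identities $f(x)\bar j(x)=f(1)\bar j(x)$, $e_i(x)\bar j(x)=0$, and $e_1(x)e_2(x)=0$ are exactly what is needed, and both parts follow as you compute. The paper gives no proof of this lemma at all (it is quoted from Huffman--Pless, Theorem 6.1.3), and your argument is essentially the standard idempotent computation found in that reference, with the only hypothesis to keep in view being $\gcd(n,q)=1$, which you correctly flag as what makes $\bar j(x)$ a well-defined idempotent.
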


\begin{rem}\label{rem: gmatrix}
Regarding Lemma~\ref{lem: cplus1}, 
$\bar{j}(x)$ corresponds to all-one vector $\mathbf{1}_n$ 
for all $q$ and $n$. 
Thus, Lemma~\ref{lem: cplus1} $(2)$
gives a generator matrix. 
Let $G_i$ be a generator matrix for $C_i$ for $i=1,2$,  and let $\widetilde{D}_i$ be the extended code of $D_i$ for $i=1,2$. 
Then, a generator matrix for $\widetilde{D}_i$ is as follows: 
\begin{equation*}
\begin{blockarray}{ccccc}
0 &1 &\cdots &n-1 &\infty     \\ \cline{1-5}
\begin{block}{(cccc|c)}
  &  &       &    &0          \\
  &  &{\huge{G_i}} & &\vdots   \\
  &  &       &    &0           \\ \cline{1-5}
1 &1 &\cdots &1   &k           \\
\end{block}
\end{blockarray}
\ \;,
\end{equation*}
where $\sum_{i=0}^{n-1} 1 + k =0.$
\end{rem}


\color{black}
Let $C$ be a code of length $n$. 
Then the symmetric group $S_n$ acts on the $n$ coordinates of $C$. 
The automorphism group $\PAut(C)$ of $C$ is a subgroup of $S_n$ such that 
\[
\PAut(C):=\{\sigma\in S_n\mid C^\sigma=C\}, 
\]
where 
\[
C^\sigma
:=
\{(c_{\sigma(1)},\ldots,c_{\sigma(n)})\mid (c_1,\ldots,c_n)\in C\}. 
\]

\begin{ex}\label{ex:AutQR}
Let $Q_{q,p}$ be the quadratic residue code of
length $p$ over $\FF_q$. 
We consider extending $Q_{q,p}$ when $Q_{q,p}$ is odd-like.
Let $\widetilde{Q}_{q,p+1}$ be the extended quadratic 
residue code of length $p+1$ over $\FF_q$. 
Then, 
the automorphism group of 
$\widetilde{Q}_{q,p+1}$ includes $PSL_2(p)$ 
\cite{HP}
(see also 
\cite{{assmus-mattson},{Blahut},{E},{Huffman}}). 
We identify the coordinates of 
$\widetilde{Q}_{q,p+1}$, $\{0,1,\ldots,p-1,\infty\}$ with $PG(1,p)$. 
Then, the action of $PGL_2(p)$ on $PG(1,p)$ is $3$-transitive 
(see \cite[Propositions 4.6 and 4.8]{BJL}) and
the action of $PSL_2(p)$ on $PG(1,p)$ 
is $2$-homogeneous (see \cite{E}). 
Thus, for $\ell\in \NN$, ${(\widetilde{Q}_{q,p+1})}_\ell$ is a $2$-design 
whenever $(\widetilde{Q}_{q,p+1})_\ell$ is non-empty. 
In addition, if $p\equiv -1\pmod{4}$, 
then the action of $PSL_2(p)$ on $PG(1,p)$ 
is $3$-homogeneous (see \cite{BR}). 
Thus, if $p\equiv -1\pmod{4}$, for $\ell\in \NN$, ${(\widetilde{Q}_{q,p+1})}_\ell$ is a $3$-design 
whenever $(\widetilde{Q}_{q,p+1})_\ell$ is non-empty.
However, if $p\equiv 1\pmod{4}$, 
\color{black}
the action of $PSL_2(p)$ on $PG(1,p)$ 
is not 3-homogeneous 
(see \cite{BR}). 
Indeed, 
let $a$ be a generator of $\FF_p^\ast$. 
Then 
\[
\binom{X}{3}
=PSL_2(p)\{0,1,\infty\}\sqcup PSL_2(p)\{0,a,\infty\}. 
\]
\end{ex}

\color{black}
A $t$-$(v,h,\lambda)$ design  (or $t$-design for short)
is a pair 
$\mathcal{D}=(\Omega,\mathcal{B})$, where $\Omega$ is a set of points of 
cardinality $v$, and $\mathcal{B}$ is a collection of $h$-element subsets
of $\Omega$ called blocks, with the property that any $t$ points are 
contained in precisely $\lambda$ blocks.

The support of a vector 
${x}:=(x_{1}, \dots, x_{n})$, 
$x_{i} \in \GF_{q}$, is 
the set of indices of its nonzero coordinates: 
${\rm supp} ({x}) = \{ i \mid x_{i} \neq 0 \}$\index{$supp (x)$}.
Let $\Omega:=\{1,\ldots,n\}$ and 
$\mathcal{B}(C_\ell):=\{\supp({x})\mid {x}\in C_\ell\}$. 
Then, for a code $C$ of length $n$, 
we say that $C_\ell$ is a $t$-design if 
$(\Omega,\mathcal{B}(C_\ell))$ is a $t$-design. 
We next show a theorem that gives a sufficient condition for a code to hold designs, 
which is called the Assmus--Mattson Theorem. 




\begin{thm}[{{\cite[Theorem 4.2]{assmus-mattson}}, {\cite[Theorem 2.2]{Tanabe}}}]
\label{thm: AM}
Let $C$ be a linear code of length $n$ over $\FF_q$ with minimum weight $d$, and let $e$ be the minimum weight of $C^{\perp,E}$.
Let $t$ be an integer less than $d$, and let $B_i$ be the number of vectors of weight $i$ in $C^{\perp,E}$.
Let $w_0$ and $w_1$ be the largest integers satisfying
\begin{align*}
w_0-\left[\frac{w_0+(q-2)}{q-1}\right]<d,\,\, 
w_1-\left[\frac{w_1+(q-2)}{q-1}\right]<e,
\end{align*}
where, if $q=2$, we have $w_0=w_1:=n$. Suppose that $s=|\{i \mid B_i \ne 0,\, 0<i \le n-t\}|$ is at most $d-t$.
Then, for each weight $i$ with $d \le i \le w_0$ 
(resp. $e \le i \le min\{n-t,w_1\}$), the vectors of weight $i$ in $C$ (resp. $C^{\perp,E}$) support a $t$-design. 
\end{thm}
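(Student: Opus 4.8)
The plan is to deduce the design property from an invariance statement about shortened codes: for a $t$-subset $T\subseteq\{1,\dots,n\}$, let $C_T$ be the length-$(n-t)$ code obtained by keeping the codewords of $C$ that vanish on $T$ and deleting those coordinates, with weight distribution $A_w(C_T)$. First I would record the elementary double count $\sum_{|T|=t}A_w(C_T)=\binom{n-w}{t}A_w$, obtained by counting pairs $(c,T)$ with $\wt(c)=w$ and $\supp(c)\cap T=\emptyset$; this pins down the value each $A_w(C_T)$ must take once $T$-independence is known. Since $A_w(C_T)$ is exactly the number of weight-$w$ codewords of $C$ whose support avoids $T$, and since complementing the blocks of a design with constant block size again yields a design, the statement that $A_w(C_T)$ is independent of $T$ for every $w$ is equivalent to the statement that each nonempty shell $C_w$ supports a $t$-design. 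This reduces the theorem to the single assertion that the weight enumerator $W_{C_T}$ does not depend on $T$.

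Next I would pass to the dual side. Using $(C_T)^{\perp,E}=(C^{\perp,E})^{T}$, the dual of a shortened code being the punctured dual, the MacWilliams identity in length $n-t$ writes $W_{C_T}$ as a fixed linear transform of $W_{(C^{\perp,E})^{T}}$, so it suffices to show that the enumerator of the punctured dual is independent of $T$. I would relate this punctured enumerator to global data of $C^{\perp,E}$: projection onto the coordinates outside $T$ is $|K_T|$-to-one, where $K_T$ is the subcode of $C^{\perp,E}$ supported inside $T$, so the number of weight-$a$ words in $(C^{\perp,E})^{T}$ is $|K_T|^{-1}$ times the number of $v\in C^{\perp,E}$ with $\wt(v|_{\bar T})=a$. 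These counts are governed by the split weight distribution $\beta_{a,b}(T)=|\{v\in C^{\perp,E}:\wt(v|_{\bar T})=a,\ \wt(v|_{T})=b\}|$, whose $T$-averages are again global quantities through $\sum_{|T|=t}\beta_{a,b}(T)=\binom{a+b}{b}\binom{n-a-b}{t-b}B_{a+b}$.

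The crux is an inversion argument powered by the hypothesis $s\le d-t$. Only nonzero weights $i$ of $C^{\perp,E}$ with $1\le i\le n-t$ contribute to the part of $W_{(C^{\perp,E})^{T}}$ that the MacWilliams transform carries into the shell range $d\le w\le w_0$ of $C$, and there are at most $s$ such weights. Matching these at most $s$ free coefficients against the $d-t$ linear constraints forced by $t<d$ (equivalently, by $A_w(C_T)=0$ for $1\le w<d$) produces a square system whose coefficient matrix is a Vandermonde-type, hence nonsingular, matrix; its unique solution must then coincide with the $T$-averaged solution found above, which is manifestly $T$-independent. The integers $w_0,w_1$ enter precisely to keep the restriction-to-$\bar T$ map weight-faithful over $\FF_q$ with $q>2$: the condition $w_0-\lfloor(w_0+q-2)/(q-1)\rfloor<d$, i.e. $w_0-\lceil w_0/(q-1)\rceil<d$, is what lets the relevant transformed coefficients recover the shell counts without interference from the $(q-1)$-fold scalar redundancy of supports, which is why it is vacuous ($w_0=n$) when $q=2$.

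I expect this inversion to be the main obstacle: one must verify that $s\le d-t$ makes the governing matrix square and nonsingular, and that the $\FF_q$ bookkeeping (scalar redundancy of supports and weight drops under projection, both controlled by $w_0,w_1$) does not corrupt the counts. The dual claim for the shells of $C^{\perp,E}$ in the range $e\le i\le\min\{n-t,w_1\}$ then follows by interchanging the roles of $C$ and $C^{\perp,E}$ throughout the same argument.
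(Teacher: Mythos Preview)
The paper does not prove Theorem~\ref{thm: AM}; it is quoted from the cited references \cite{assmus-mattson,Tanabe} as a known tool, and is used only as a point of comparison in Section~\ref{sec:rem}. There is therefore no proof in the paper against which to compare your proposal.

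That said, your outline is the classical Assmus--Mattson argument and is essentially sound: reduce to $T$-independence of the shortened weight enumerator $W_{C_T}$, dualize via $(C_T)^{\perp,E}=(C^{\perp,E})^T$ and MacWilliams, then use $s\le d-t$ together with the forced vanishing $A_w(C_T)=0$ for $1\le w<d$ to determine the remaining free coefficients uniquely, hence independently of $T$. The one step that is hand-wavy is the ``Vandermonde-type, hence nonsingular'' claim: the matrix coming from MacWilliams is built from Krawtchouk polynomials rather than a literal Vandermonde, and its invertibility, while standard, needs its own short argument. Your reading of $w_0,w_1$ is correct in spirit: the inequality guarantees that any two codewords of weight at most $w_0$ sharing a support are scalar multiples, so support counts and codeword counts differ only by the factor $q-1$ in that range, which is why the condition is vacuous for $q=2$.
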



\subsection{Jacobi polynomials}

Let $C$ be a code of length $n$ over $\FF_q$ and $T\subset [n]:=\{1,\ldots,n\}$. 
Then the Jacobi polynomial of $C$ with $T$ is defined as follows \cite{Ozeki}:
\[
J_{C,T} (w,z,x,y) :=\sum_{c\in C}w^{m_0(c)} z^{m_1(c)}x^{n_0(c)}y^{n_1(c)}, 
\]
where for $c=(c_1,\ldots,c_n)$, 
\begin{align*}
m_0(c)&=|\{j\in T\mid c_j=0 \}|,\\
m_1(c)&=|\{j\in T\mid c_j \neq 0 \}|,\\
n_0(c)&=|\{j\in [n]\setminus T\mid c_j=0 \}|,\\
n_1(c)&=|\{j\in [n]\setminus T\mid c_j \neq 0 \}|.\\
\end{align*}
\begin{rem}\label{rem:hom}

It is easy to see that 
$C_\ell$ is a $t$-design 
if and only if 
the coefficient of $z^{t}x^{n-\ell}y^{\ell-t}$ 
in $J_{C,T}$ is independent of the choice of $T$ with $|T|=t$. 
Moreover, for all $\ell\in \NN$, $C_\ell\cup C_\ell^{\perp,\ast}$ \,$(\ast:E,H)$ is a $t$-design 
if and only if 
$J_{C,T}+J_{C^{\perp,\ast},T}$ \,$(\ast:E,H)$ 
is independent of the choice of $T$ with $|T|={t}$. 

\end{rem}

\subsection{Harmonic weight enumerators}\label{sec:Har}


In this subsection, we review the concept of 
harmonic weight enumerators.

Let $\Omega=\{1, 2,\ldots,n\}$ be a finite set (which will be the set of coordinates of the code) and 
let $X$ be the set of its subsets,  while for all $k= 0,1,\dots, n$, 
$X_{k}$ is the set of its $k$-subsets.
We denote by $\R X$ and $\R X_k$ the 
real vector spaces spanned by the elements of $X$
and $X_{k}$, respectively.
An element of $\R X_k$ is denoted by
$$f=\sum_{z\in X_k}f(z)z$$
and is identified with the real-valued function on $X_{k}$ given by 
$z \mapsto f(z)$. 

An element $f\in \R X_k$ can be extended to an element $\widetilde{f}\in \R X$ by setting
$$\widetilde{f}(u)=\sum_{z\in X_k, z\subset u}f(z)$$
for all $u \in X$. If an element $g \in \R X$ is equal to $\widetilde{f}$ 
for some $f \in \R X_{k}$, then we say that $g$ has degree $k$. 
The differentiation $\gamma$ is the operator on $\RR X$ defined by linearity from 
$$\gamma(z) =\sum_{y\in X_{k-1},y\subset z}y$$
for all $z\in X_k$ and for all $k=0,1, \ldots, n$, and $\Harm_{k}$ is the kernel of $\gamma$:
$$\Harm_k =\ker(\gamma|_{\R X_k}).$$

The symmetric group $S_n$ acts on $\Omega$ and 
the automorphism group $\PAut(\mathcal{B})$ of $\mathcal{B}$ is 
the subgroup of $S_n$ such that 
\[
\PAut(\mathcal{B}):=\{\sigma\in S_n\mid \mathcal{B}^\sigma=\mathcal{B}\}, 
\]
where 
\[
\mathcal{B}^\sigma:=\{ \{\sigma(b_1),\ldots,\sigma(b_m)\}\mid 
\{b_1,\ldots,b_m\}\in \mathcal{B}\}. 
\]
Let $G$ be a subgroup of $\PAut(\mathcal{B})$. 
Then, $G$ acts on $\Harm_k$ through the above action and 
we denote by $\Harm_k^{G}$ the set of the invariants of $G$: 
\[
\Harm_k^{G}=\{f\in {\rm Harm}_k\mid f^\sigma=f, \forall \sigma\in G\}, 
\]
where $f^\sigma$ is defined by linearity from 
\[
\{{i_1},\ldots, {i_k}\}^\sigma=\{{\sigma(i_1)},\ldots, {\sigma(i_k)}\}. 
\]
\begin{thm}[{{\cite[Theorem 2.5]{AMMN}}}]
\label{thm:design-aut}
A set $\mathcal{B} \subset X_{m}$ $($where $m \leq n$$)$ of blocks is a $t$-design 
if and only if $\sum_{b\in \mathcal{B}}\widetilde{f}(b)=0$ 
for all $f\in \Harm_k^{G}$, $1\leq k\leq t$. 
\end{thm}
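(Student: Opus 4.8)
The plan is to reduce the statement to the group-free harmonic characterization of $t$-designs (the case $G=\{1\}$) and then recover the $G$-invariant refinement by an averaging argument. Throughout I identify a block set $\mathcal{B}\subset X_m$ with its characteristic vector $\chi_{\mathcal{B}}=\sum_{b\in\mathcal{B}}b\in\R X_m$, and I equip each $\R X_k$ with the inner product for which the $k$-subsets form an orthonormal basis. Restricting $\widetilde{f}$ to $m$-sets gives $\widetilde{f}|_{X_m}\in\R X_m$, and the basic observation is
\[
\sum_{b\in\mathcal{B}}\widetilde{f}(b)=\la \chi_{\mathcal{B}},\,\widetilde{f}|_{X_m}\ra,
\]
so the asserted condition is precisely that $\chi_{\mathcal{B}}$ be orthogonal to $\widetilde{\Harm_k}|_{X_m}$ for all $1\le k\le t$.

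First I would establish the group-free version, i.e. the equivalence for $G=\{1\}$, using the decomposition of $\R X_m$ attached to the Johnson scheme:
\[
\R X_m=\bigoplus_{k=0}^{\min(m,\,n-m)}V_k,\qquad V_k:=\widetilde{\Harm_k}|_{X_m},
\]
where $V_0$ is the line of constant functions. Let $W_{t,m}$ be the inclusion matrix with rows indexed by $X_t$, columns by $X_m$, and $(T,u)$-entry equal to $1$ iff $T\subset u$. By definition $\mathcal{B}$ is a $t$-design iff $W_{t,m}\chi_{\mathcal{B}}$ is a constant vector on $X_t$, which happens iff $\chi_{\mathcal{B}}$ is orthogonal to $\{W_{t,m}^{\top}h : h\in\R X_t,\ \sum_{T}h(T)=0\}$. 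Using that $W_{t,m}^{\top}$ carries the degree-$k$ piece of $\R X_t$ isomorphically onto $V_k\subset\R X_m$ for $1\le k\le t$ (valid in the relevant range of $t$ and $m$), this orthogonality is exactly $\chi_{\mathcal{B}}\perp V_k$ for $1\le k\le t$, which is the claim for $G=\{1\}$.

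Next I would upgrade to an arbitrary subgroup $G\le\PAut(\mathcal{B})$. The extension map is $S_n$-equivariant, $\widetilde{f^{\sigma}}=(\widetilde{f})^{\sigma}$, the operator $\gamma$ commutes with the action so that $\Harm_k$ is $G$-stable, the inner product is $G$-invariant, and $G\le\PAut(\mathcal{B})$ gives $(\chi_{\mathcal{B}})^{\sigma}=\chi_{\mathcal{B}^{\sigma}}=\chi_{\mathcal{B}}$ for all $\sigma\in G$. The implication from $\Harm_k$ to $\Harm_k^{G}$ is trivial since $\Harm_k^{G}\subset\Harm_k$. For the converse, given any $g\in\Harm_k$ form its average $\bar{g}=\frac{1}{|G|}\sum_{\sigma\in G}g^{\sigma}\in\Harm_k^{G}$; then, using $(\chi_{\mathcal{B}})^{\sigma^{-1}}=\chi_{\mathcal{B}}$ together with the $G$-invariance of the inner product,
\[
\la\chi_{\mathcal{B}},\widetilde{g}|_{X_m}\ra
=\frac{1}{|G|}\sum_{\sigma\in G}\la\chi_{\mathcal{B}},(\widetilde{g})^{\sigma}|_{X_m}\ra
=\la\chi_{\mathcal{B}},\widetilde{\bar{g}}|_{X_m}\ra=0
\]
by hypothesis, where the last equality uses $(\widetilde{g})^{\sigma}=\widetilde{g^{\sigma}}$ and $\frac{1}{|G|}\sum_{\sigma}\widetilde{g^{\sigma}}=\widetilde{\bar{g}}$. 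Hence orthogonality to all of $\Harm_k^{G}$ forces orthogonality to all of $\Harm_k$, and combining this with the group-free step proves the theorem.

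The routine linear algebra (the averaging identity and the equality $\sum_{b}\widetilde{f}(b)=\la\chi_{\mathcal{B}},\widetilde{f}|_{X_m}\ra$) is immediate, so I expect the main obstacle to be the group-free step: verifying the Johnson-scheme decomposition and checking that $W_{t,m}^{\top}$ restricts to an isomorphism on each $V_k$ with $1\le k\le t$. This is where the range hypotheses (implicitly $t\le m$, and $k$ small enough that $\widetilde{\Harm_k}|_{X_m}\neq 0$) are genuinely used, and it is the only place requiring real harmonic-analysis input rather than formal manipulation.
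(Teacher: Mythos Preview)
The paper does not prove this theorem: it is quoted verbatim from \cite{AMMN} (as the attribution in the theorem header indicates) and used only through Remark~\ref{rem:hom-2}, so there is no in-paper argument to compare against.

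That said, your proposal is correct and is the expected route. The group-free step is Delsarte's harmonic characterization of $t$-designs, and your formulation via the inclusion matrix $W_{t,m}$ and the Johnson-scheme decomposition $\R X_m=\bigoplus_k V_k$ is the standard one; the only technical point, which you flag yourself, is that $W_{t,m}^{\top}$ is injective on each degree-$k$ piece for $1\le k\le t\le m$, and this is classical. The passage from $\Harm_k$ to $\Harm_k^{G}$ by averaging is exactly the content of the refinement: since $G\le\PAut(\mathcal{B})$ forces $\chi_{\mathcal{B}}$ to be $G$-fixed, the pairing $\la\chi_{\mathcal{B}},\widetilde{g}|_{X_m}\ra$ depends only on the $G$-average $\bar g$ of $g$, so vanishing on $\Harm_k^{G}$ already gives vanishing on all of $\Harm_k$. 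This averaging argument is essentially forced, and it is the same mechanism the present paper exploits later in the proof of Lemma~\ref{lem:main}(2), where the Reynolds operator $R(T)$ appears explicitly.
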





In \cite{Bachoc-2}, 
the harmonic weight enumerator associated with a linear code $C$ over $\FF_q$ was defined as follows. 
\begin{df}
Let $C$ be a linear code of length $n$ over $\FF_q$ and let $f\in\Harm_{k}$. 
The harmonic weight enumerator associated with $C$ and $f$ is

$$w_{C,f}(x,y)=\sum_{{c}\in C}\widetilde{f}({\supp (c)})x^{n-\wt({c})}y^{\wt({c})}.$$
\end{df}


\begin{rem}\label{rem:hom-2}
It follows from Theorem 
\ref{thm:design-aut} that 
$C_\ell$ is a $t$-design if and only if 
the coefficient of $x^{n-\ell}y^\ell$ in $w_{C,f}(x,y)$ vanishes 
for all $f\in \Harm_k^{\PAut(C)}\ (1\leq k\leq t)$. 
Moreover, for all $\ell\in \NN$, $C_\ell\cup C_\ell^{\perp,\ast}$ \,$(\ast:E,H)$ is a $t$-design 
if and only if 
$w_{C,f}+w_{C^{\perp,\ast},f}=0$ \,$(\ast:E,H)$
for all $f\in \Harm_k^{\PAut(C)}\ (1\leq k\leq t)$. 
\end{rem}

\section{Preliminary steps of proving Theorem~\ref{thm:newmain}}\label{sec:prel}
In this section, we show Lemma~\ref{lem:main}, Corollary~\ref{cor:sub}, and Proposition~\ref{prop:intersect} 
and give proofs of them. 
These are used for proving Theorem~\ref{thm:newmain}. 
First, we show Lemma~\ref{lem:main} and Corollary~\ref{cor:sub}. 
They are similar in content to \cite{AMMN}, but we give proofs of them for readers' convenience. 
We use $\sqcup$ to denote disjoint union and 
a code $C$ is isodual if $C$ and $C^{\perp, \ast}\,(\ast:E, H)$ are equivalent.



\begin{lem}\label{lem:main}
Let $C$ be an isodual code of length $n$ over $\FF_q$, 
$G$ be a subgroup of $\PAut(C)$, and 
$X:=\{1,\ldots,n\}$.
\color{black}
Let $\sigma \in S_n$ such that 
$C^{\perp, \ast}=C^\sigma \,(\ast:E, H)$. 
Then, $G$ acts on $\binom{X}{t}$ and 
we assume that 
$G$ has two orbits, namely,  
\[
\binom{X}{t}=GT_1\sqcup GT_2,
\]
such that $(GT_1)^\sigma=GT_2$. 
Then the following statements hold: 
\begin{enumerate}
\item [(1)]
$J_{C,T}+J_{C^{\perp, \ast},T}\, (\ast:E, H)$  
is independent of the choice of $T$ with $|T|={t}$. 

\item [(2)]
Let $f$ be a harmonic function of degree $t$, 
which is an invariant of $\PAut(C)$. 
Then we have 
\[
w_{C,f}+w_{C^{\perp, \ast},f}=0\, (\ast:E, H). 
\]

\end{enumerate}
\end{lem}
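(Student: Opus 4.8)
The plan is to derive both statements from the symmetry hypothesis $(GT_1)^\sigma = GT_2$ using the characterizations of $t$-designs recorded in Remarks~\ref{rem:hom} and~\ref{rem:hom-2}. The key observation is that applying the coordinate permutation $\sigma$ to a code turns its Jacobi polynomial (resp. harmonic weight enumerator) at a set $T$ into that of the image code at $\sigma(T)$; more precisely, since $\sigma$ permutes coordinates, one checks directly from the definitions that $J_{C^\sigma, T} = J_{C, \sigma^{-1}(T)}$ and $w_{C^\sigma, f} = w_{C, f^{\sigma^{-1}}}$ for any $f$. Using $C^{\perp,\ast} = C^\sigma$, this gives $J_{C^{\perp,\ast}, T} = J_{C, \sigma^{-1}(T)}$, and likewise for the harmonic enumerators.

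For part (1), first I would note that the hypothesis says $G$ acts on $\binom{X}{t}$ with exactly two orbits $GT_1$ and $GT_2$, interchanged by $\sigma$. Since $G \le \PAut(C)$, the Jacobi polynomial $J_{C,T}$ depends only on the $G$-orbit of $T$ (as $J_{C,T} = J_{C, T^g}$ for $g \in G$, again immediate from the definition together with $C^g = C$). So $J_{C,T}$ takes only two values, say $P_1$ when $T \in GT_1$ and $P_2$ when $T \in GT_2$. The identity $J_{C^{\perp,\ast},T} = J_{C, \sigma^{-1}(T)}$ combined with $(GT_1)^\sigma = GT_2$, equivalently $\sigma^{-1}(GT_2) = GT_1$ and $\sigma^{-1}(GT_1) = GT_2$ (here one must be slightly careful that $(GT_1)^\sigma = GT_2$ also forces $(GT_2)^\sigma = GT_1$, which follows because $\sigma$ permutes the two-element set of orbits), shows that for $T \in GT_1$ we get $J_{C^{\perp,\ast},T} = P_2$, and for $T \in GT_2$ we get $J_{C^{\perp,\ast},T} = P_1$. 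Hence $J_{C,T} + J_{C^{\perp,\ast},T} = P_1 + P_2$ in both cases, so the sum is independent of $T$. By the second part of Remark~\ref{rem:hom}, this is exactly the claim; alternatively it proves directly that every shell $C_\ell \cup C_\ell^{\perp,\ast}$ is a $t$-design.

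For part (2), the argument is parallel but additive in $f$. Take $f \in \Harm_t$ invariant under $\PAut(C)$, hence in particular under $G$ and under $\sigma$ if $\sigma \in \PAut(C)$ — but we do not need $\sigma$-invariance; what we need is that $w_{C^{\perp,\ast}, f} = w_{C^\sigma, f} = w_{C, f^{\sigma^{-1}}}$. Now I would expand $w_{C,f}(x,y) = \sum_\ell a_\ell x^{n-\ell} y^\ell$ where, by Theorem~\ref{thm:design-aut} applied to $\mathcal{B}(C_\ell)$, the coefficient $a_\ell$ equals $\sum_{c \in C_\ell} \widetilde f(\supp c)$, and this can be rewritten as a sum over the two $G$-orbits of $t$-sets weighted by the design intersection numbers of $C_\ell$; the point is that $a_\ell$ is, up to a combinatorial constant depending only on $n,\ell,t$, equal to $\lambda_1(C_\ell) \sum_{T \in GT_1}\widetilde f(T) + \lambda_2(C_\ell)\sum_{T\in GT_2}\widetilde f(T)$ where $\lambda_i$ counts codewords of $C_\ell$ whose support contains a given $t$-set in orbit $i$. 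Applying the same to $C^{\perp,\ast}$ and using $C^{\perp,\ast} = C^\sigma$: the orbit counts get swapped, $\lambda_i(C^{\perp,\ast}_\ell) = \lambda_{3-i}(C_\ell^\sigma)$... actually cleaner is to use $w_{C^{\perp,\ast},f} = w_{C, f^{\sigma^{-1}}}$ and note $f^{\sigma^{-1}}$ is again a degree-$t$ harmonic function invariant under $\sigma^{-1} G \sigma = G$ (since $G \trianglelefteq$... no, just $G^{\sigma}$; here one uses that $\sigma$ normalizes $G$, which should follow from $\sigma$ permuting the $G$-orbits — or one simply restricts attention to $\Harm_t^{\PAut(C)}$ and argues the two enumerators are negatives termwise using the orbit decomposition and $\sum_{T \in \binom X t}\widetilde f(T) = 0$, which is the content of $f$ being harmonic summed over all $t$-sets). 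The main obstacle I anticipate is precisely this last bookkeeping: making the cancellation rigorous requires the identity $\sum_{T \in GT_1} \widetilde f(T) = -\sum_{T \in GT_2}\widetilde f(T)$ for harmonic $f$ of degree $t$, which holds because $\sum_{T \in \binom X t} \widetilde f(T)$ is a scalar multiple of $\gamma^{t}$-type contraction that vanishes for $f \in \Harm_t$ with $t \ge 1$ — this is the genuinely nontrivial input, and it is exactly what Theorem~\ref{thm:design-aut} and the $\Harm_k$ machinery are built to supply. Once that is in hand, $w_{C,f} + w_{C^{\perp,\ast},f} = 0$ coefficient by coefficient.
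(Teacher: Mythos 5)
Your part (1) is essentially the paper's own argument: $J_{C,T}$ depends only on the $G$-orbit of $T$, while $J_{C^{\perp,\ast},T}=J_{C^{\sigma},T}$ equals $J_{C,T'}$ with $T'$ in the opposite orbit, so the sum is always $J_{C,T_1}+J_{C,T_2}$. For part (2) you take a genuinely different route. The paper argues structurally: since $G$ has exactly two orbits of equal size on $\binom{X}{t}$, the invariant space $\Harm_t^{G}$ is spanned by $R(T_1)-R(T_2)$ (Reynolds operator), this generator satisfies $f^{\sigma^{-1}}=-f$, and hence $w_{C,f}+w_{C^{\perp,\ast},f}=w_{C,f}+w_{C,f^{\sigma^{-1}}}=0$. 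You instead expand the coefficient of $x^{n-\ell}y^{\ell}$ in $w_{C,f}$ as $\sum_{T}f(T)N_\ell(T)$, where $N_\ell(T)$ counts the $c\in C_\ell$ with $T\subseteq\supp(c)$; $N_\ell$ is constant on $G$-orbits because $G\leq\PAut(C)$, the corresponding counts for $C^{\perp,\ast}=C^{\sigma}$ are the same with the two orbits interchanged, so the sum of the two coefficients is $(\lambda_1+\lambda_2)\bigl(\sum_{T\in GT_1}f(T)+\sum_{T\in GT_2}f(T)\bigr)$, which vanishes because $\sum_{T\in\binom{X}{t}}f(T)=0$ for every $f\in\Harm_t$ with $t\geq 1$. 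That identity, which you flag as the nontrivial input, is in fact elementary: summing over all $(t-1)$-sets the relations encoded in $\gamma f=0$ gives $t\sum_{T}f(T)=0$ (equivalently, apply Theorem~\ref{thm:design-aut} to the complete design on $t$-sets). What your route buys: it never needs to identify $\Harm_t^{G}$, and it proves the conclusion for every harmonic $f$ of degree $t$, not only $\PAut(C)$-invariant ones; what the paper's route buys is a two-line computation once $\Harm_t^{G}=\langle R(T_1)-R(T_2)\rangle$ is observed. Two clean-ups for your write-up: the coefficient formula is exact (no combinatorial constant depending on $n,\ell,t$ is needed), and the detour about $\sigma$ normalizing $G$ can simply be deleted, since your final argument never uses invariance of $f$ --- only that $(GT_2)^{\sigma}=GT_1$, which, as you note, follows from $(GT_1)^{\sigma}=GT_2$ because $\sigma$ is a bijection of $\binom{X}{t}$.
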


Applying Lemma~\ref{lem:main}, 
we have the following corollary: 

\begin{cor}\label{cor:sub}
Let $p$ be an odd prime where $q$ is a quadratic residue modulo $p$, and
let $\widetilde{Q}_{q,p+1}$ be the extended quadratic residue code of
length $p+1$ over $\FF_q$. 
Then 
for $\ell\in \NN$, $(\widetilde{Q}_{q,p+1})_\ell \cup {(\widetilde{Q}_{q,p+1}^{\perp, \ast})}_\ell\, (\ast:E, H)$ 
is a $3$-design 
whenever 
it is non-empty.
\end{cor}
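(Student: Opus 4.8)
The plan is to derive Corollary~\ref{cor:sub} as a direct application of Lemma~\ref{lem:main}~(1) with $t=3$, so the entire argument reduces to verifying the hypotheses of that lemma for $C=\widetilde{Q}_{q,p+1}$. First I would recall that the extended quadratic residue code is isodual: for the Euclidean case this is the classical fact (see Example~\ref{ex:AutQR} and \cite{HP}) that $\widetilde{Q}_{q,p+1}$ is equivalent to its Euclidean dual via a coordinate permutation coming from the ambient group action, and for the Hermitian case the analogous statement holds over $\FF_{r^2}$ by the same reasoning applied to the Hermitian inner product. Thus there is some $\sigma\in S_{p+1}$ with $C^{\perp,\ast}=C^\sigma$ (for $\ast:E,H$ respectively), which is exactly the data Lemma~\ref{lem:main} requires.

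Next I would take $G=PSL_2(p)\leq \PAut(\widetilde{Q}_{q,p+1})$, identifying the $p+1$ coordinates with $PG(1,p)$ as in Example~\ref{ex:AutQR}. The key structural input is the orbit decomposition of $G$ on $\binom{X}{3}$. When $p\equiv -1\pmod 4$ the action is $3$-homogeneous, so there is a single orbit and Lemma~\ref{lem:main} applies trivially (with $T_1=T_2$, $\sigma$ fixing the orbit); when $p\equiv 1\pmod 4$ the action of $PSL_2(p)$ on $PG(1,p)$ is not $3$-homogeneous, and as displayed in Example~\ref{ex:AutQR} there are exactly two orbits,
\[
\binom{X}{3}=PSL_2(p)\{0,1,\infty\}\sqcup PSL_2(p)\{0,a,\infty\},
\]
where $a$ generates $\FF_p^\ast$. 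So I would set $T_1=\{0,1,\infty\}$ and $T_2=\{0,a,\infty\}$. It then remains to check that the permutation $\sigma$ realizing isoduality swaps these two orbits, i.e. $(GT_1)^\sigma=GT_2$; this follows because $\sigma$ normalizes $G$ but lies outside $G$ (indeed $\sigma$ can be taken in $PGL_2(p)\setminus PSL_2(p)$, since $PGL_2(p)$ is $3$-transitive and hence fuses the two $PSL_2(p)$-orbits), so conjugation by $\sigma$ cannot fix both orbits and must interchange them. With all hypotheses of Lemma~\ref{lem:main} verified, part~(1) gives that $J_{C,T}+J_{C^{\perp,\ast},T}$ is independent of $T$ with $|T|=3$, and by Remark~\ref{rem:hom} this is equivalent to $(\widetilde{Q}_{q,p+1})_\ell\cup(\widetilde{Q}_{q,p+1}^{\perp,\ast})_\ell$ being a $3$-design whenever non-empty.

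I expect the main obstacle to be the bookkeeping around the isoduality permutation $\sigma$: one must pin down a concrete $\sigma$ (coming from a non-square scaling of coordinates, or equivalently a suitable element of $PGL_2(p)$) and confirm both that $C^{\perp,\ast}=C^\sigma$ holds on the nose for the chosen normalization of the quadratic residue code, and that this same $\sigma$ swaps the two $3$-orbits. The Hermitian case requires a little extra care because the Hermitian dual involves the Frobenius $y\mapsto y^r$, so one should check that the composition of Frobenius with the coordinate permutation still lands inside the equivalence captured by Lemma~\ref{lem:main}'s hypothesis $C^{\perp,H}=C^\sigma$; this is where the assumption $q=r^2$ with $r$ prime is used. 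Once $\sigma$ is fixed, the orbit-swapping claim is immediate from the $3$-transitivity of $PGL_2(p)$ versus the mere $2$-homogeneity of $PSL_2(p)$, so the remaining steps are routine.
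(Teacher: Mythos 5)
Your proposal is correct and follows essentially the same route as the paper: both reduce the statement to Lemma~\ref{lem:main} with $G=PSL_2(p)$ acting on $PG(1,p)$, using the isoduality of $\widetilde{Q}_{q,p+1}$ and the two-orbit decomposition $\binom{X}{3}=G\{0,1,\infty\}\sqcup G\{0,a,\infty\}$ when $p\equiv 1\pmod 4$ (the paper cites Isaacs for the orbit-swapping hypothesis, while you argue it via $\sigma\in PGL_2(p)\setminus PSL_2(p)$ and $3$-transitivity, which is the same underlying fact), and both dispose of $p\equiv -1\pmod 4$ by the $3$-homogeneity/single-orbit observation before concluding via Remark~\ref{rem:hom}.
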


\begin{proof}[Proof of Lemma~\ref{lem:main}]

\begin{enumerate}
\item [(1)]

Recall that 
$C^{\sigma}=C^{\perp,\ast}$ \,$(\ast:E,H)$
and $(GT_1)^\sigma=GT_2$. 
We note that 
for all $T\in GT_i\ (i\in\{1,2\})$, $J_{C,T}=J_{C,T_i}$. 
Then, for any $T\in \binom{X}{t}$, 
\begin{align*}
J_{C,T}+J_{C^{\perp,\ast},T}&=J_{C,T}+J_{C^\sigma,T}\\
&=J_{C,T}+J_{C,T^{\sigma^{-1}}}\\
&=J_{C,T_1}+J_{C,T_2}\,(\ast:E,H). 
\end{align*}
Hence, $J_{C,T}+J_{C^{\perp,\ast},T}$ \,$(\ast:E,H)$
is independent 
of the choice of $T$ with $|T|={t}$. 


\item [(2)]
For $f\in {\rm Harm}_{t}^{G}$, 
$f$ is written as a linear combination of 
$R({T_1})$ and $R({T_2})$, 
where $R$ is the Reynolds operator: for $T\in \binom{X}{t}$, 
\[
R({T})=\frac{1}{|G|}\sum_{\sigma\in G}{T}^{\sigma}. 
\]
Based on the above assumption, $|GT_1|=|GT_2|$. 
Then, there exists a constant $c\in \RR$ such that 
\[
{\rm Harm}_{t}^{G}=\langle c(R({T_1})-R({T_2}))\rangle. 
\]
Let $f:=R({T_1})-R({T_2})$. 
Then 
\begin{align*}
w_{C,f}+w_{C^{\perp,\ast},f}&=w_{C,f}+w_{C^\sigma,f}\\
&=w_{C,f}+w_{C,f^{\sigma^{-1}}}\\
&=w_{C,f}+w_{C,-f}=0\,(\ast:E,H). 
\end{align*}
\end{enumerate}
This completes the proof of Lemma~\ref{lem:main}. 
\end{proof}

\begin{proof}[Proof of Corollary~\ref{cor:sub}]

By {{\cite[Corollary 2.10.1]{AK}}}, the extended quadratic residue code 
is an isodual code, so we apply Lemma~\ref{lem:main} to it.
Let $p$ be an odd prime where $q$ is a quadratic residue modulo $p$,
and let $\widetilde{Q}_{q,p+1}$ be the extended quadratic residue code of
length $p+1$ over $\FF_q$. 
Then, if $p\equiv 1\pmod{4}$, by Example~\ref{ex:AutQR}, 
\[
\PAut(\widetilde{Q}_{q,p+1})\supset PSL_2(p) =:G. 
\]
Let $X:=\{0,1,\ldots,p-1,\infty\}$. 
Then, 
$G$ 
acts on $\binom{X}{3}$ and, 
by Example~\ref{ex:AutQR}
and \cite[P.235, 8A.8.]{I}, 
we have two orbits, namely, 
\[
\binom{X}{3}=GT_1\sqcup GT_2, 
\]
that satisfy the assumption of Lemma~\ref{lem:main}. 
By Lemma~\ref{lem:main}, 
\[
J_{\widetilde{Q}_{q,p+1},T}+J_{\widetilde{Q}_{q,p+1}^{\perp,\ast},T} \,(\ast:E,H)
\] 
 is independent 
of the choice of $T$ with $|T|={3}$, 
and, for $f\in {\rm Harm}_3^{G}$, 
we have 
\[
w_{\widetilde{Q}_{q,p+1},f}+w_{\widetilde{Q}_{q,p+1}^{\perp,\ast},f}=0\,(\ast:E,H).  
\]
Since $(\widetilde{Q}_{q,p+1})_\ell$ and $(\widetilde{Q}_{q,p+1}^{\perp,\ast})_\ell$
\,$(\ast:E,H)$ are 2-designs, 
for $f\in {\rm Harm}_k^{G}\ (1\leq k\leq 2)$, 
we have 
\[
w_{\widetilde{Q}_{q,p+1},f}+w_{\widetilde{Q}_{q,p+1}^{\perp,\ast},f}=0\,(\ast:E,H).  
\]
Then, by Remark~\ref{rem:hom} or \ref{rem:hom-2}, 
the proof is complete if $p\equiv 1\pmod{4}$. 
On the other hand, if $p\equiv -1\pmod{4}$, 
for all $\ell \in \NN$, $(\widetilde{Q}_{q,p+1})_\ell$ is a $3$-design whenever $(\widetilde{Q}_{q,p+1})_\ell$ is non-empty
by Example~\ref{ex:AutQR}.
Therefore, the assertion of this corollary is evident.
This completes the proof of Corollary~\ref{cor:sub}.

\end{proof}


Next, we show Proposition~\ref{prop:intersect}. 
Then, 
the set of shells of code is not allowed to have repeated blocks when we have the inequalities in Theorem~\ref{thm: AM}, 
whereas we assume that $t$-designs in $\widetilde{Q}_{q,p+1}$ allow the existence of repeated blocks.

\begin{prop}\label{prop:intersect}
Let $q$ be the square of a prime number, namly, let $q=r^2$. 
Let $\widetilde{Q}_{r^2,p+1}$ 
be the extended quadratic residue code of length $p+1$ over $\FF_{r^2}$.
Then we have 

\begin{equation*}
\widetilde{Q}_{r^2,p+1} \cap \,\widetilde{Q}_{r^2,p+1}^{\perp,H}
=
\begin{cases}
\{0\} & \text{if $(i)\ \;:$ $-r \in Q 
\,\wedge 
\,p\not\equiv -1\!\!\!\pmod{r}$, }\\
\{0,\mathbf{1}_{p+1}\} & \text{if $(ii)\,\,:$ $-r \in Q 
\,\wedge 
\,p\equiv -1\!\!\!\pmod{r}$,} \\
\widetilde{Q_{r^2,p}^{\perp,H}} 
& \text{if $(iii):$ $-r \in N 
\,\wedge 
\,p\not\equiv -1\!\!\!\pmod{r}$,} \\
\widetilde{Q}_{r^2,p+1} 
& \text{if $(iv)\::$ $-r \in N 
\,\wedge 
\,p\equiv -1\!\!\!\pmod{r}$,} 
\end{cases}
\end{equation*}
where $Q$ is the set of quadratic residues modulo $p$, $N$ is the set of non-quadratic residues modulo $p$, 
$\mathbf{1}_{p+1}$ is the all-one vector of length $p+1$, 
and 
$\widetilde{Q_{r^2,p}^{\perp,H}}$ 
is the extended code of ${Q_{r^2,p}^{\perp,H}}$. 
\end{prop}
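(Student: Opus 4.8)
The plan is to compute the Hermitian dual $\widetilde{Q}_{r^2,p+1}^{\perp,H}$ explicitly and then intersect it with $\widetilde{Q}_{r^2,p+1}$ directly. First I would recall the structure of quadratic residue codes as odd-like duadic codes: writing $Q_{r^2,p} = D_1 = \langle 1 - e_2(x)\rangle$ and $N_{r^2,p} = D_2 = \langle 1 - e_1(x)\rangle$ for the pair of odd-like duadic codes coming from the splitting given by the multiplier $\mu_{-1}$ (whose behaviour is governed by whether $-1$, equivalently $-r$ together with the residue class of $r$ modulo $p$, lies in $Q$ or $N$), and recording the two relevant facts: (a) the Euclidean dual of the odd-like code $D_i$ is the even-like code $C_i$ or $C_j$ depending on the splitting (this is the standard duadic-code duality, \cite{HP}), and (b) the Hermitian dual over $\FF_{r^2}$ is obtained from the Euclidean dual by applying the Frobenius $x_i \mapsto x_i^r$ coordinatewise, which on a cyclic code corresponds to the multiplier $\mu_r$. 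Combining these, $Q_{r^2,p}^{\perp,H}$ is itself a duadic code, and it equals either $C_1$ or $C_2$ according to whether $r$ (hence $-r$, given the $\mu_{-1}$ behaviour) is a QR mod $p$.

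Next I would pass to the extended codes. Using Remark~\ref{rem: gmatrix}, the extended odd-like duadic code $\widetilde{D}_i$ has a generator matrix consisting of a generator matrix $G_i$ of the even-like code $C_i$ (padded with a zero in the $\infty$ coordinate) together with the row $(1,1,\ldots,1,k)$ where $(p)\cdot 1 + k = 0$ in $\FF_{r^2}$, i.e. $k \equiv -p \pmod{r}$ in the prime subfield. The key observation is that the extra all-ones-type row is the all-one vector $\mathbf{1}_{p+1}$ exactly when $k = 1$, i.e. when $p \equiv -1 \pmod r$; otherwise the last coordinate is a different scalar. I would then compute $\widetilde{Q}_{r^2,p+1}^{\perp,H}$ as the extension (or a near-extension) of $Q_{r^2,p}^{\perp,H}$ — being careful that extension and dualisation do not commute on the nose; the standard fact is that $\widetilde{D}_i^{\perp,E} = \widetilde{D_i^{\perp,E}}$ precisely because the extension is forced by the even-like condition, so $\widetilde{Q}_{r^2,p+1}^{\perp,H} = \widetilde{Q_{r^2,p}^{\perp,H}}$, which is $\widetilde{C_1}$ or $\widetilde{C_2}$. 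Since $C_1 \cap C_2 = \{0\}$ by Lemma~\ref{lem: cplus1}(1), the whole computation reduces to deciding, in each of the four cases, whether $\widetilde{Q}_{r^2,p+1}$ (which is $\widetilde{D_1}$, say) equals, contains, is contained in, or meets trivially the code $\widetilde{Q_{r^2,p}^{\perp,H}}$.

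I would organise the four cases by the two binary parameters, $[-r \in Q]$ versus $[-r \in N]$ and $[p \equiv -1 \bmod r]$ versus not. In case $(iv)$, $-r \in N$ makes the Hermitian dual "match" $\widetilde{Q}_{r^2,p+1}$ on the cyclic part, and $p \equiv -1 \bmod r$ makes the extended row coincide, giving self-duality $\widetilde{Q}_{r^2,p+1}^{\perp,H} = \widetilde{Q}_{r^2,p+1}$, whence the intersection is the whole code. In case $(iii)$, the cyclic parts still match but the extra coordinate disagrees, and a short rank/dimension count shows the Hermitian dual is a proper subcode, namely $\widetilde{Q_{r^2,p}^{\perp,H}}$, which is contained in $\widetilde{Q}_{r^2,p+1}$, so the intersection is that subcode. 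In case $(i)$, $-r \in Q$ sends the Hermitian dual to the "opposite" duadic code $\widetilde{C_j}$, which meets $\widetilde{D_1}$ only in vectors supported on the intersection $C_1 \cap C_2 = \{0\}$ after accounting for the $\infty$ coordinate — and with $p \not\equiv -1 \bmod r$ there is no all-one vector to survive, so the intersection is $\{0\}$. In case $(ii)$, the same mechanism gives intersection inside $\langle \mathbf{1}_{p+1}\rangle$, and since $p \equiv -1 \bmod r$ guarantees $\mathbf{1}_{p+1}$ lies in both codes (it is the extra generator row of $\widetilde{D_1}$ and likewise of $\widetilde{C_j}$'s extension — one checks $\mathbf{1}_{p+1}$ is both even-like-extended and in the dual), the intersection is exactly $\{0, \mathbf{1}_{p+1}\}$.

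The main obstacle I expect is bookkeeping the interaction between three operations that almost-but-not-quite commute: taking the Euclidean dual, twisting by the Frobenius $\mu_r$ (to get the Hermitian dual), and extending by the parity coordinate. In particular I need the precise statement of which duadic code $Q_{r^2,p}^{\perp,H}$ is — this is where the hypothesis "$-r \in Q$ vs.\ $-r \in N$" enters, since $-1 \bmod p$ controls the $\mu_{-1}$ splitting, $r \bmod p$ controls the Frobenius multiplier $\mu_r$, and it is their product $-r$ that determines whether the composite multiplier $\mu_{-r}$ fixes or swaps $C_1, C_2$ — and I must verify that extension commutes with Hermitian duality for these even-like/odd-like duadic codes, so that $\widetilde{Q}_{r^2,p+1}^{\perp,H}$ is genuinely $\widetilde{Q_{r^2,p}^{\perp,H}}$ (or $\widetilde{C_j}$) rather than merely sharing its cyclic part. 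Once those two structural facts are pinned down, each of the four cases is a short dimension count plus the observation about when $\mathbf{1}_{p+1}$ survives, using $C_1 \cap C_2 = \{0\}$ from Lemma~\ref{lem: cplus1}.
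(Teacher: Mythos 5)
Your route is essentially the paper's (Hermitian duality of duadic/QR codes controlled by the multiplier $\mu_{-r}$, extension via the generator matrix of Remark~\ref{rem: gmatrix}, the intersection $C_1\cap C_2=\{0\}$ from Lemma~\ref{lem: cplus1}, and a four-way case split on $-r\in Q$ or $N$ and $p\equiv -1\pmod r$ or not), but the structural identity you build the case analysis on is wrong. The ``standard fact'' $\widetilde{D}_i^{\perp,E}=\widetilde{D_i^{\perp,E}}$, and hence your identification $\widetilde{Q}_{r^2,p+1}^{\perp,H}=\widetilde{Q_{r^2,p}^{\perp,H}}$ ($=\widetilde{C}_1$ or $\widetilde{C}_2$), fails by a dimension count: $\dim \widetilde{Q}_{r^2,p+1}^{\perp,H}=(p+1)-\tfrac{p+1}{2}=\tfrac{p+1}{2}$, whereas $\dim\widetilde{Q_{r^2,p}^{\perp,H}}=\tfrac{p-1}{2}$. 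Dualisation and extension never commute on the nose; the correct statement, which is exactly the content of the paper's Lemma~\ref{lem: duadicdual}, is
\[
\widetilde{Q}_{r^2,p+1}^{\perp,H}\;=\;\widetilde{Q_{r^2,p}^{\perp,H}}\,\oplus\,\langle \mathbf{1}_{p+1}\rangle\;=\;\widetilde{D}_jM \ \ (\text{if } -r\in Q)\ \ \text{resp.}\ \ \widetilde{D}_iM\ \ (\text{if } -r\in N),\qquad M={\rm diag}(1,\ldots,1,-1/p),
\]
i.e.\ the dual of the extended code is the extension of the even-like Hermitian dual \emph{together with the all-one vector} $\mathbf{1}_{p+1}$ --- and it is $\mathbf{1}_{p+1}$, not the generator row $(1,\ldots,1,-p)$ of $\widetilde{D}_i$, that sits in the dual; the discrepancy between these two vectors (i.e.\ whether $-p=1$ in $\FF_{r^2}$) is precisely what separates the two congruence cases.

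Taken literally, your identification breaks half of the cases: if the dual really were $\widetilde{C}_j$, then in case $(ii)$ the intersection would be $\{0\}$ (every word of $\widetilde{C}_j$ has $\infty$-coordinate $0$, so $\mathbf{1}_{p+1}\notin\widetilde{C}_j$), and in case $(iv)$ it would be $\widetilde{C}_i$ rather than all of $\widetilde{Q}_{r^2,p+1}$; also in case $(iii)$ the Hermitian dual is \emph{not} a subcode of $\widetilde{Q}_{r^2,p+1}$ (it contains $\mathbf{1}_{p+1}$, which is not in $\widetilde{Q}_{r^2,p+1}$ when $p\not\equiv-1\pmod r$) --- only the intersection equals $\widetilde{Q_{r^2,p}^{\perp,H}}$, so your ``the Hermitian dual is a proper subcode'' reasoning reaches the right answer from a false premise. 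Your case narrative (``the extended row coincides'', ``no all-one vector survives'') shows you have the right informal picture, but it contradicts the identity you assert, and the proposal never proves the corrected identity. That proof is the missing ingredient and is what the paper supplies: Hermitian orthogonality of $\widetilde{C}_1$ and $\widetilde{C}_2$ when $\mu_{-r}$ fixes the codes, resp.\ Hermitian self-orthogonality of $\widetilde{C}_i$ when $\mu_{-r}$ gives the splitting (from the proof of Proposition~4.8 of the Dicuangco--Moree--Sol\'e paper), plus the orthogonality of $(1,\ldots,1,-p)$ to $\mathbf{1}_{p+1}$ and a dimension count, yielding $\widetilde{D}_i^{\perp,H}=\widetilde{D}_jM$ or $\widetilde{D}_iM$. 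With that lemma in place, your four intersection computations go through essentially as in the paper.
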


We now show a proof of Proposition~\ref{prop:intersect}. 
Let $C$ be a code and $M$ be a matrix,  
and let $CM=\{c \cdot M \mid c \in C\}$. 
We begin by proving the following lemma: 


\begin{lem}
\label{lem: duadicdual}
  Let $D_1$ and $D_2$ be a pair of odd-like duadic codes of length $p$ over $\FF_{r^2}$, and let $\widetilde{D}_i$ be the extended code of $D_i$ for $i=1,2$. Then the following statement hold:
\begin{itemize}
 \item[(1)]
  If $D_i\mu_{-r}=D_i$ for $i=1,2$, then we have 
  $\widetilde{D}_1^{\perp,H} = \widetilde{D}_2M$ and
  $\widetilde{D}_2^{\perp,H} = \widetilde{D}_1M$ with $M=diag(1,1,\ldots, 1,-1/p)$.
  \item[(2)] 
  If $\mu_{-r}$ gives the splitting for $D_1$ and $D_2$, then 
  we have $\widetilde{D}_i^{\perp,H} = \widetilde{D}_iM$ with $M=diag(1,1,\ldots, 1,-1/p)$ for $i=1,2$.
\end{itemize}
\end{lem}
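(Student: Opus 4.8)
The plan is to work from the known description of the Hermitian dual of a cyclic (indeed duadic) code in terms of multipliers, and then track the effect of extension. Recall that for a cyclic code $D = \langle g(x)\rangle$ of length $p$ over $\FF_{r^2}$ with $(p,r)=1$, the Hermitian dual $D^{\perp,H}$ is again cyclic, and one has the standard identity $D^{\perp,H} = (D^{\perp,E})\mu_{-1}$ combined with the Frobenius twist: concretely, $D^{\perp,H} = (D\mu_{-1})$ up to the Frobenius action $x_i\mapsto x_i^r$, which at the level of the defining set amounts to applying $\mu_{-r}$. So the first step is to pin down, for a pair of odd-like duadic codes $D_1,D_2$ with even-like subcodes $C_1,C_2$, the statement that $D_1^{\perp,H}$ and $D_2^{\perp,H}$ are the even-like duadic codes $C_2, C_1$ respectively precisely when $\mu_{-r}$ fixes each $D_i$ (case (1)), or are $C_1,C_2$ when $\mu_{-r}$ swaps them (case (2)). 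This is the duadic-code analogue of \cite[Theorem 6.4.2 or 6.5.x]{HP} for the Euclidean case, adapted to the Hermitian inner product by replacing $\mu_{-1}$ with $\mu_{-r}$.

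Next I would pass to the extended codes. Using Remark~\ref{rem: gmatrix}, a generator matrix of $\widetilde{D}_i$ is the generator matrix $G_i$ of $C_i$ bordered by a zero column, together with the all-one row $(1,\dots,1)$ bordered by the scalar $k$ with $(p\cdot 1) + k = 0$, i.e.\ $k = -p$ (in $\FF_{r^2}$). The key computation is then to check the Hermitian orthogonality of $\widetilde{D}_2 M$ against $\widetilde{D}_1$ (resp.\ $\widetilde{D}_i M$ against $\widetilde{D}_i$), where $M = \mathrm{diag}(1,\dots,1,-1/p)$. This breaks into four bilinear checks: (a) $C_i$-row against $C_j$-row, which reduces to $C_i \subset C_j^{\perp,H}$ for the appropriate indices --- exactly what Step~1 supplies via $C_1 \cap C_2 = \{0\}$ and dimension count (Lemma~\ref{lem: cplus1}(1) gives $\dim C_1 + \dim C_2 = p-1$, hence $C_1^{\perp,H} \supset C_2$ is forced); (b) $C_i$-row against the all-one row: since $C_i$ is even-like, $\sum_j (c_j)(1)^r = \sum_j c_j = 0$, and the $\infty$-coordinate of the $C$-rows is $0$, so this pairs to $0$ regardless of $M$; (c) all-one row against $C_j$-row: symmetric to (b) after taking the Hermitian conjugate, using that $1^r = 1$; (d) all-one row against all-one row: here the $\infty$-entries are $k=-p$ and the diagonal twist $-1/p$, giving $p\cdot 1 \cdot 1 + (-p)(-1/p)^r = p + (-p)(-1/p) = p + 1$... which must vanish mod the characteristic $r$ --- and this is exactly where the hypothesis $p \equiv -1 \pmod r$ would enter if it were needed, but in fact in case (1)/(2) of this lemma there is no congruence hypothesis, so I must be more careful: the correct pairing is $p\cdot(1\cdot 1^r) + (-p)\cdot\big((-1/p)\cdot(-1/p)^r\big)$, and since everything here lies in the prime field $\FF_r$ the Frobenius $r$-th power acts trivially, giving $p + (-p)(1/p^2) = p - 1/p$; clearing denominators this is $(p^2-1)/p$, and $p^2 - 1 = (p-1)(p+1) \equiv 0 \pmod r$ is NOT automatic. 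I expect the resolution is that the scaling constant should be read so that the last-row self-pairing genuinely vanishes, i.e.\ the correct matrix entry makes $k^2\cdot(\text{entry}) + p = 0$; tracing through $k=-p$ this forces the entry to be $-1/p$, and then the self-pairing is $p\cdot 1 + p^2\cdot(-1/p) = p - p = 0$. Good --- so (d) works, and the apparent difficulty dissolves once the bordering scalar $k=-p$ is substituted consistently rather than $k=-1$.

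Having verified all four orthogonality relations, I get the containment $\widetilde{D}_2 M \subset \widetilde{D}_1^{\perp,H}$ (resp.\ $\widetilde{D}_i M \subset \widetilde{D}_i^{\perp,H}$). The final step is a dimension count: $\dim \widetilde{D}_i = \dim D_i = (p+1)/2$ since $D_i$ is odd-like of dimension $(p+1)/2$, so $\dim \widetilde{D}_i + \dim \widetilde{D}_j = p+1 = $ length, forcing equality $\widetilde{D}_2 M = \widetilde{D}_1^{\perp,H}$, and the companion identity follows by symmetry (swap $1\leftrightarrow 2$, using that $M$ is an involution up to the relation $M^2 = \mathrm{diag}(1,\dots,1,1/p^2)$, which still scales the last coordinate by a nonzero constant and hence preserves each code). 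The main obstacle, as the computation above shows, is bookkeeping: getting the Frobenius twist $\mu_{-1}\rightsquigarrow\mu_{-r}$ correct in the Hermitian setting of Step~1, and substituting the bordering scalar $k=-p$ consistently in Step~2(d) so that the self-pairing of the all-one rows vanishes --- everything else is a routine dimension argument.
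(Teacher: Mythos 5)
Your overall skeleton is the same as the paper's: write generator matrices for $\widetilde{D}_1$ and $\widetilde{D}_2M$ (resp.\ $\widetilde{D}_iM$) via Remark~\ref{rem: gmatrix}, check Hermitian orthogonality of the four types of row pairs, and finish with a dimension count; the paper sources the crucial orthogonality of the even-like parts from the proof of \cite[Proposition 4.8]{DMS}, while you invoke the Hermitian analogue of the Euclidean duadic duality theorems (replace $\mu_{-1}$ by $\mu_{-r}$), which is the same fact. Your checks (b), (c) via even-likeness and the corner computation (d) are fine once you settle on the correct pairing $(\hat{\mathbf{1}},\hat{\mathbf{1}}M)_H=p+(-p)\cdot 1=0$ (your first attempt applied $M$ to both arguments, but you corrected this), and the concluding dimension argument matches the paper.

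The genuine problem is your justification of step (a). You claim that $C_2\subset C_1^{\perp,H}$ is ``forced'' by $C_1\cap C_2=\{0\}$ together with $\dim C_1+\dim C_2=p-1$ from Lemma~\ref{lem: cplus1}(1). Trivial intersection plus complementary dimensions never forces orthogonality (e.g.\ $\langle(1,0)\rangle$ and $\langle(1,1)\rangle$ in $\FF_q^2$), and this reasoning would apply identically in both cases of the lemma, whereas the two cases require genuinely different statements: case (1) needs $\widetilde{C}_1\perp_H\widetilde{C}_2$, while case (2) needs $\widetilde{C}_i$ to be Hermitian \emph{self}-orthogonal, which no intersection/dimension argument could ever produce (indeed $C_i\cap C_i=C_i$). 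The orthogonality must instead come from your Step 1 itself --- the Hermitian duadic duality $D_1^{\perp,H}=C_2$, $D_2^{\perp,H}=C_1$ when $\mu_{-r}$ fixes each $D_i$, and $D_i^{\perp,H}=C_i$ when $\mu_{-r}$ gives the splitting (equivalently, via \cite[Propositions 4.6 and 4.8]{DMS}) --- and this is precisely the only place where the hypotheses on $\mu_{-r}$ enter. If you delete the parenthetical and instead deduce (a) directly from Step 1 ($C_1\subset D_1$ gives $C_1\perp_H C_2$ in case (1), and $C_i\subset D_i$ gives $C_i\perp_H C_i$ in case (2)), the argument closes and agrees with the paper's proof.
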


\begin{proof}[Proof of Lemma~\ref{lem: duadicdual}]
\begin{itemize}
\item[(1)]
By Remark~\ref{rem: gmatrix}, generator matrices for $\widetilde{D}_1$ and $\widetilde{D}_2M$ are as follows:
\begin{equation}
\begin{blockarray}{ccccc}
0 &1 &\cdots &p-1 &\infty     \\ \cline{1-5}
\begin{block}{(cccc|c)}
  &  &       &    &0          \\
  &  &{\huge{G_1}} & &\vdots   \\
  &  &       &    &0           \\ \cline{1-5}
1 &1 &\cdots &1   &-p           \\
\end{block}
\end{blockarray}
\ \;,\ \;
\begin{blockarray}{ccccc}
0 &1 &\cdots &p-1 &\infty     \\ \cline{1-5}
\begin{block}{(cccc|c)}
  &  &       &    &0          \\
  &  &{\huge{G_2}} & &\vdots   \\
  &  &       &    &0           \\ \cline{1-5}
1 &1 &\cdots &1   &1           \\
\end{block}
\end{blockarray}
\ \;,
\end{equation}
where $G_i$ is a generator matrix for $C_i$ for $i=1,2$. 
By the proof of {{\cite[Proposition 4.8]{DMS}}}, 
if $D_1\mu_{-r}=D_1$, then $\widetilde{C}_1$ and $\widetilde{C}_2$ are Hermitian orthogonal to each other. 
In addition, $\hat{\mathbf{1}}=(1,1,\ldots,1,-p)$ is Hermitian orthogonal to 
both $\mathbf{1}_{p+1}=(1,1,\ldots,1,1)$ and $\widetilde{C}_2$.
Since 
$\dim(\widetilde{D}_1)=\dim(\widetilde{D}_2M)$, 
we have $\widetilde{D}_1^{\perp,H} = \widetilde{D}_2M$.
We also have $\widetilde{D}_2^{\perp,H} = \widetilde{D}_1M$ by the same proof.
  
  \item [(2)]
  By Remark~\ref{rem: gmatrix}, generator matrices for $\widetilde{D}_i$ and $\widetilde{D}_iM$ are as follows:
\begin{equation}
\begin{blockarray}{ccccc}
0 &1 &\cdots &p-1 &\infty     \\ \cline{1-5}
\begin{block}{(cccc|c)}
  &  &       &    &0          \\
  &  &{\huge{G_i}} & &\vdots   \\
  &  &       &    &0           \\ \cline{1-5}
1 &1 &\cdots &1   &-p           \\
\end{block}
\end{blockarray}
\ \;,\ \;
\begin{blockarray}{ccccc}
0 &1 &\cdots &p-1 &\infty     \\ \cline{1-5}
\begin{block}{(cccc|c)}
  &  &       &    &0          \\
  &  &{\huge{G_i}} & &\vdots   \\
  &  &       &    &0           \\ \cline{1-5}
1 &1 &\cdots &1   &1           \\
\end{block}
\end{blockarray}
\ \;,
\end{equation}
where $G_i$ is a generator matrix for $C_i$ for $i=1,2$. 
By the proof of {{\cite[Proposition 4.8]{DMS}}}, 
if $\mu_{-r}$ gives the splitting for $D_1$ and $D_2$, then 
$\widetilde{C}_i$ is Hermitian self-orthogonal.
In addition, $\hat{\mathbf{1}}=(1,1,\ldots,1,-p)$ is Hermitian orthogonal to 
both $\mathbf{1}_{p+1}=(1,1,\ldots,1,1)$ and $\widetilde{C}_i$.
Since 
$\dim(\widetilde{D}_i)=\dim(\widetilde{D}_iM)$, 
we have $\widetilde{D}_i^{\perp,H} = \widetilde{D}_iM$ for $i=1,2$.
\end{itemize}
\end{proof}

\begin{proof}[Proof of Proposition~\ref{prop:intersect}]
Let $D_1$ and $D_2$ be a pair of odd-like quadratic residue codes of length $p$ over $\FF_{r^2}$, and let $\widetilde{D}_i$ be the extended code of $D_i$ for $i=1,2$. 
We show a proof by dividing into cases where $-r \in Q$ and $-r \in N$.
\begin{itemize}
  \item [(a)]
If $-r \in Q$, then $D_i\mu_{-r}=D_i$ for $i=1,2$.
By Lemma~\ref{lem: duadicdual}$(1)$, 
we have 
  $\widetilde{D}_1^{\perp,H} = \widetilde{D}_2M$ and
  $\widetilde{D}_2^{\perp,H} = \widetilde{D}_1M$ with $M=diag(1,1,\ldots, 1,-1/p)$.
Then, if $-1/p=1$, namely, if $p \equiv -1\pmod{r}$, 
$M$ is equal to the identity matrix. 
Furthermore, we have $\widetilde{C}_1 \cap \widetilde{C}_2=\{0\}$ by Lemma~\ref{lem: cplus1} $(1)$. 
Thus, by $(3.1)$ in Lemma~\ref{lem: duadicdual}, we have
\begin{equation*}
\begin{cases}
\widetilde{D}_1 \cap \widetilde{D}_1^{\perp, H} = 
\widetilde{D}_1 \cap \widetilde{D}_2M = \{0\} &
\text{if $p \not\equiv -1\pmod{r}$,}\\
\widetilde{D}_1 \cap \widetilde{D}_1^{\perp, H} = 
\widetilde{D}_1 \cap \widetilde{D}_2 = \{0,\mathbf{1}_{p+1}\} &
\text{if $p \equiv -1\pmod{r}$.}
\end{cases}
\end{equation*}

Therefore, we have
\begin{equation*}
\begin{cases}
\widetilde{Q}_{r^2,p+1} \cap \widetilde{Q}_{r^2,p+1}^{\perp,H} = 
\{0\} &
\text{if $(i)$,}\\
\widetilde{Q}_{r^2,p+1} \cap \widetilde{Q}_{r^2,p+1}^{\perp,H} = \{0,\mathbf{1}_{p+1}\} &
\text{if $(ii)$.}
\end{cases}
\end{equation*}

\item [(b)]
If $-r \in N$, then $\mu_{-r}$ gives the splitting for $D_1$ and $D_2$. 
By Lemma~\ref{lem: duadicdual}$(2)$, 
we have $\widetilde{D}_i^{\perp,H} = \widetilde{D}_iM$ with $M=diag(1,1,\ldots, 1,-1/p)$ for $i=1,2$.
Then, if $-1/p=1$, namely, if $p \equiv -1\pmod{r}$, 
$M$ is equal to the identity matrix.
Thus, by $(3.2)$ in Lemma~\ref{lem: duadicdual}, we have
\begin{equation*}
\begin{cases}
\widetilde{D}_i \cap \widetilde{D}_i^{\perp, H} = 
\widetilde{D}_i \cap \widetilde{D}_iM = \widetilde{C}_i
&
\text{if $p \not\equiv -1\pmod{r}$,}\\
\widetilde{D}_i \cap \widetilde{D}_i^{\perp, H} = 
\widetilde{D}_i \cap \widetilde{D}_i = 
\widetilde{D}_i &
\text{if $p \equiv -1\pmod{r}$.}
\end{cases}
\end{equation*}

In addition, by {{\cite[Proposition 4.6]{DMS}}}, 
we have $C_i = D_i^{\perp, H}$ 
and $C_i$ is the even-like code, so 
we have $\widetilde{C}_i = \widetilde{D_i^{\perp,H}}$.
Therefore, we have
\begin{equation*}
\begin{cases}
\widetilde{Q}_{r^2,p+1} \cap \widetilde{Q}_{r^2,p+1}^{\perp,H} = 
\widetilde{Q_{r^2,p}^{\perp,H}} &
\text{if $(iii)$,}\\
\widetilde{Q}_{r^2,p+1} \cap \widetilde{Q}_{r^2,p+1}^{\perp,H} = \widetilde{Q}_{r^2,p+1} &
\text{if $(iv)$.}
\end{cases}
\end{equation*}
$(iv)$ is the case where $\widetilde{Q}_{r^2,p+1}$ is Hermitian self-dual.
\end{itemize}
This completes the proof of Proposition~\ref{prop:intersect}.
\end{proof}

\begin{rem}\label{rem:intersect}
With respect to Proposition~\ref{prop:intersect}, 
if $q$ is a prime power, namely, if $q=r^t$,
then we have 
\begin{equation*}
\widetilde{Q}_{q,p+1} \cap \,\widetilde{Q}_{q,p+1}^{\perp,E}
=
\begin{cases}
\{0\} & \text{if $(v)\ \;\,:$ $-1 \in Q 
\,\wedge 
\,p\not\equiv -1\!\!\!\pmod{r}$, }\\
\{0,\mathbf{1}_{p+1}\} & \text{if $(vi)\,\,\,:$ $-1 \in Q 
\,\wedge 
\,p\equiv -1\!\!\!\pmod{r}$,} \\
\widetilde{Q_{q,p}^{\perp,E}} 
& \text{if $(vii)\;:$ $-1 \in N 
\,\wedge 
\,p\not\equiv -1\!\!\!\pmod{r}$,} \\
\widetilde{Q}_{q,p+1} 
& \text{if $(viii):$ $-1 \in N 
\,\wedge 
\,p\equiv -1\!\!\!\pmod{r}$.} 
\end{cases}
\end{equation*}
It can be proven like Proposition~\ref{prop:intersect}, 
using {{\cite[Theorem 6.4.2]{HP}}, {\cite[Theorem 6.4.3]{HP}}, and {\cite[Theorem 6.4.12]{HP}}}.  
\end{rem}

\section{Proof of Theorem~\ref{thm:newmain}}\label{sec:sub}

In this section, 
we give a proof of Theorem~\ref{thm:newmain}, using Corollary~\ref{cor:sub}, Proposition~\ref{prop:intersect}, and Lemma~\ref{lem: duadicdual}. 
Before proving Theorem~\ref{thm:newmain}, we state a lemma: 

\begin{lem}[{{\cite[Lemma 6.2.4]{HP}}}]\label{lem: -1QN}
Let $p$ be an odd prime. Then, $-1$ is a quadratic residue modulo $p$
if and only if $p \equiv 1\pmod{4}$.
\end{lem}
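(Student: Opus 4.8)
The plan is to invoke Euler's criterion, which characterizes quadratic residues through exponentiation in $\FF_p$. First I would recall that the multiplicative group $\FF_p^\ast$ is cyclic of order $p-1$: fixing a generator $g$, the nonzero squares are exactly the even powers of $g$, so they form the unique subgroup of index $2$. From this I would deduce that an element $a\in\FF_p^\ast$ is a quadratic residue modulo $p$ if and only if $a^{(p-1)/2}=1$ in $\FF_p$. Indeed, every square clearly satisfies $a^{(p-1)/2}=1$, the map $x\mapsto x^{(p-1)/2}$ is a group homomorphism $\FF_p^\ast\to\{\pm1\}$, and its kernel is a subgroup of index at most $2$ containing all squares; since the squares already form a subgroup of index $2$, the kernel coincides with the set of squares.

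Next I would specialize this criterion to the case $a=-1$. This gives that $-1$ is a quadratic residue modulo $p$ if and only if $(-1)^{(p-1)/2}=1$ in $\FF_p$. Because $p$ is an odd prime, we have $-1\neq 1$ in $\FF_p$, so the condition $(-1)^{(p-1)/2}=1$ holds precisely when the integer exponent $(p-1)/2$ is even.

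Finally I would translate this parity condition into the desired congruence: $(p-1)/2$ is even exactly when $p-1\equiv 0\pmod 4$, that is, when $p\equiv 1\pmod 4$. Chaining the three equivalences together yields that $-1$ is a quadratic residue modulo $p$ if and only if $p\equiv 1\pmod 4$, which is the assertion.

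The only genuine content here is Euler's criterion, and the single step requiring care is justifying that the kernel of $x\mapsto x^{(p-1)/2}$ is exactly the subgroup of squares. This is where I expect the minor obstacle to lie; it is handled by combining the cyclicity of $\FF_p^\ast$ with the fact that over the field $\FF_p$ the polynomial $x^{(p-1)/2}-1$ has at most $(p-1)/2$ roots, which forces the kernel to have size exactly $(p-1)/2$ and hence to equal the set of squares.
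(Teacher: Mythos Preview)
Your argument is correct: it is the standard proof via Euler's criterion, and every step is justified. Note, however, that the paper does not supply its own proof of this lemma---it simply cites it as \cite[Lemma~6.2.4]{HP}---so there is nothing to compare your approach against beyond observing that your proof is the classical one found in that reference.
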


%

\begin{proof}[Proof of Theorem~\ref{thm:newmain}]
Let $p$ be an odd prime and $p \equiv1 \pmod{4}$, 
and let $r$ be not a quadratic residue modulo $p$. 
Then, $-1$ 
is a quadratic residue modulo $p$ 
by Lemma~\ref{lem: -1QN}. 
Therefore, 
$-r$ is not a quadratic residue modulo $p$, namely, 
$-r \in N$. 
Let $\widetilde{Q}_{{r^2},p+1}$ be the extended quadratic residue code of length $p+1$ over $\FF_{r^2}$.
If $p \equiv-1 \pmod{r}$, 
$\widetilde{Q}_{r^2,p+1}$ is Hermitian self-dual 
by Proposition~\ref{prop:intersect}. Therefore, 
by Corollary~\ref{cor:sub}, 
for $\ell\in \NN$, ${(\widetilde{Q}_{{r^2},p+1})}_\ell$ is obviously a $3$-design 
whenever $(\widetilde{Q}_{{r^2},p+1})_\ell$ is non-empty.
On the other hand, if $p \not\equiv-1 \pmod{r}$, 
$\widetilde{Q}_{r^2,p+1}$ is not Hermitian self-dual.
Let $D_1$ and $D_2$ be a pair of odd-like quadratic residue codes of length $p$ over $\FF_{r^2}$.
For $i=1,2$, let $C_i$ be an even-like subcode of $D_i$.
By the proof of Lemma~\ref{lem: duadicdual} $(2)$,  
\begin{align*}
\widetilde{D}_i &= 
\{c+k\hat{\mathbf{1}} \mid 
c \in \widetilde{C}_i, k=0,1,\ldots, r-1\}, \\ 
\widetilde{D}_i^{\perp,H} &= 
\{c+k\mathbf{1}_{p+1} \mid 
c \in \widetilde{C}_i, k=0,1,\ldots, r-1\}. 
\end{align*}
Since $C_i$ is even-like for $i=1,2$, the extended coordinate of codewords in $\widetilde{C}_i$ is always equal to zero. 
Thus, for all $\ell$,
\begin{align*}
\mathcal{B}((\widetilde{Q}_{{r^2},p+1})_\ell):&=\{\supp({x})\mid {x}\in (\widetilde{Q}_{{r^2},p+1})_\ell\} \\
&=\{\supp({x})\mid {x}\in (\widetilde{Q}_{{r^2},p+1}^{\perp,H})_\ell\}
=:\mathcal{B}((\widetilde{Q}_{{r^2},p+1}^{\perp,H})_\ell).
\end{align*}
Therefore, 
by Corollary~\ref{cor:sub}, 
for $\ell\in \NN$, ${(\widetilde{Q}_{{r^2},p+1})}_\ell$ is a $3$-design 
whenever $(\widetilde{Q}_{{r^2},p+1})_\ell$ is non-empty.
This completes the proof of Theorem~\ref{thm:newmain}.
\end{proof}

We are ready to prove Corollary~\ref{cor:new}.
\begin{proof}[Proof of Corollary~\ref{cor:new}]
\begin{enumerate}
\item[(1)]
Let 
$p\equiv -3\pmod{8}$.
Then, 
$p\equiv 1\pmod{4}$ and 
$2$ is not a quadratic residue modulo $p$ by {{\cite[Lemma 6.2.5]{HP}}}. 
Therefore, by Theorem~\ref{thm:newmain}, for $\ell\in \NN$, ${(\widetilde{Q}_{4,p+1})}_\ell$ is a $3$-design 
whenever $(\widetilde{Q}_{4,p+1})_\ell$ is non-empty. 

%
\item[(2)]
Let 
$p\equiv 5\pmod{12}$. 
Then, 
$p\equiv 1\pmod{4}$ and 
$3$ is not a quadratic residue modulo $p$ by {{\cite[Lemma 6.2.9]{HP}}}. 
Therefore, by Theorem~\ref{thm:newmain}, for $\ell\in \NN$, ${(\widetilde{Q}_{9,p+1})}_\ell$ is a $3$-design 
whenever $(\widetilde{Q}_{9,p+1})_\ell$ is non-empty.
\end{enumerate}
This completes the proof of Corollary~\ref{cor:new}.
\end{proof}



The proof of Theorem~\ref{thm:newmain} showed that 
for $\ell\in \NN$, 
$(\widetilde{Q}_{{r^2},p+1})_\ell$ is a $3$-design 
in the cases of $(iii)$ and $(iv)$ in Proposition~\ref{prop:intersect}. 
By the same argument,
for $\ell\in \NN$, 
$(\widetilde{Q}_{{q},p+1})_\ell$ is a $3$-design 
in the cases of $(vii)$ and $(viii)$ in Remark~\ref{rem:intersect}. 
In other words, for $\ell\in \NN$, 
$(\widetilde{Q}_{{q},p+1})_\ell$ is a $3$-design 
if $-1 \in N$, that is, if $p \equiv -1\pmod{4}$. 
This result corresponds to the result followed by the transivity argument. 

\section{$3$-designs in the extended quadratic residue codes over $\FF_{r^2}$}\label{sec:rem}

Let $p$ be an odd prime and  
let $\widetilde{Q}_{{r^2},p+1}$ be the extended quadratic residue code of length $p+1$ over $\FF_{r^2}$. 

\subsection{Examples}
In this subsection, we demonstrate that $3$-designs obtained from Theorem~\ref{thm:newmain} do not follow from the transitivity argument with some examples. 
We also demonstrate that some examples do not follow from the Assmus--Mattson Theorem.


\begin{itemize}

  \item[(1)]
  Let $r=2$ and $p=37$. 
  The weight distribution of $\widetilde{Q}_{4,38}^{\perp,E}$ is 
  \[
  \{0\} \cup \{2i \mid 6 \le i \le 19,\, i\in \NN\}.
  \]
For $t=3$, there are twelve weights below $35$, and $d-t=9$. 
  Thus, the Assmus--Mattson Theorem is inapplicable in this case. 
However, since $37 \equiv -3 \pmod{8}$, we can show that $(\widetilde{Q}_{4,38})_\ell$ is a $3$-design by Corollary~\ref{cor:new} $(1)$. 

  \item[(2)]
  Let $r=2$ and $p=29$. 
  The weight distribution of $\widetilde{Q}_{4,30}^{\perp,E}$ is 
  \[
  \{0\} \cup \{2i \mid 6 \le i \le 15,\, i\in \NN\}.
  \]
For $t=3$, there are eight weights below $27$, and $d-t=9$. 
  Thus, the Assmus--Mattson Theorem is applicable in this case 
  and we can show that for $\ell=12,14,16$, $(\widetilde{Q}_{4,30})_\ell$ is a $3$-design by the Assmus--Mattson Theorem. 
  Furthermore, since $29 \equiv -3 \pmod{8}$, we can show that for $\ell \in \NN$, $(\widetilde{Q}_{4,30})_\ell$ is a $3$-design by Corollary~\ref{cor:new} $(1)$. 

  \item[(3)]
  Let $r=5$ and $p=17$. 
  The weight distribution of $\widetilde{Q}_{25,18}^{\perp,E}$ is 
  \[
  \{0\} \cup \{i \mid 10 \le i \le 18,\, i\in \NN\}.
  \]
For $t=3$, there are six weights below $15$, and $d-t=7$. 
  Thus, the Assmus--Mattson Theorem is applicable in this case 
  and we can show that for $\ell=10$, $(\widetilde{Q}_{25,18})_\ell$ is a $3$-design by the Assmus--Mattson Theorem. 
  Furthermore, since $17 \equiv 1\pmod{4}$ and $5$ is not a quadratic residue modulo $17$, we can show that for $\ell \in \NN$, $(\widetilde{Q}_{25,18})_\ell$ is a $3$-design by Theorem~\ref{thm:newmain}. 
\end{itemize}

Since $p\equiv 1\pmod{4}$ in all the above examples, 
it is evident that they do not follow from the transitivity argument by Example~\ref{ex:AutQR}. 
In addition, it becomes also evident that as $r$ increases, Theorem~\ref{thm:newmain} becomes effective.

\subsection{Infinite series of $3$-designs in the extended quadratic residue code over $\FF_{r^2}$}

\color{black}
Finally, we show infinite series of $3$-designs in $\widetilde{Q}_{r^2,p+1}$. 
By Theorem~\ref{thm:newmain}, 
for $\ell\in \NN$, ${(\widetilde{Q}_{r^2,p+1})}_\ell$ is a $3$-design 
whenever $(\widetilde{Q}_{r^2,p+1})_\ell$ is non-empty 
if $r$ and $p$ satisfy the following conditions:
\begin{equation}
\begin{cases}
(i)\,\,p \equiv 1\pmod{4},\\
(ii)\,\, \text{$r$ is not a quadratic residue modulo $p$}.\\
\end{cases}
\end{equation}
%
%
In the cases except for $r=2$ or $r=3$, 
we list the parameters $r$ and $p$ which satisfy the condition of $(5.1)$. 
Following are the parameters $r\,(<50)$ and $p\,(<1000)$ :

\begin{itemize}
  
   \item $r=5$,\\
   $p=13, 17, 37, 53, 73, 97, 113, 137, 157, 173, 193, 197, 233, 257, 277,\\
   \qquad293,313, 317, 337, 353, 373, 397, 433, 457$, 
  
  \item $r=7$,\\
   $p=5, 13, 17, 41, 61, 73, 89, 97, 101, 157, 173, 181, 229, 241, 257,\\
   \qquad269, 293, 313, 349, 353, 397, 409, 433, 461$,
  
  \item $r=11$,\\
  $p=13, 17, 29, 41, 61, 73, 101, 109, 149, 173, 193, 197, 233, 241, 277, \\
  \qquad281, 293, 337, 349, 373, 409, 457, 461$,
  
  \item $r=13$,\\
  $p=5, 37, 41, 73, 89, 97, 109, 137, 149, 193, 197, 229, 241, 281, 293, \\
  \qquad317, 349, 353, 397, 401, 409, 421, 449, 457, 461$,
  
  \item $r=17$,\\
  $p=5, 29, 37, 41, 61, 73, 97, 109, 113, 173, 181, 193, 197, 233, 241, \\
  \qquad269, 277, 313, 317, 337, 397, 401, 449$,
  
  \item $r=19$,\\
  $p=13, 29, 37, 41, 53, 89, 97, 109, 113, 173, 181, 193, 241, 257, 269, \\
  \qquad281, 293, 317, 337, 373, 401, 409, 421, 433, 449$,
  
  \item $r=23$,\\
  $p=5, 17, 37, 53, 61, 89, 97, 109, 113, 137, 149, 157, 181, 229, 241, \\
  \qquad281, 293, 313, 337, 373, 389, 401, 421, 433, 457$,
  
  \item $r=29$,\\
  $p=17, 37, 41, 61, 73, 89, 97, 101, 113, 137, 157, 193, 229, 269, 293, \\
  \qquad317, 337, 389, 409, 421, 433, 449, 461$,
  
  \item $r=31$,\\
  $p=13, 17, 29, 37, 53, 61, 73, 89, 137, 181, 197, 229, 241, 269, 277, \\
  \qquad313, 337, 353, 389, 401, 409, 433, 449, 457, 461$,
  
  \item $r=37$,\\
  $p=5, 13, 17, 29, 61, 89, 97, 109, 113, 193, 241, 257, 277, 281, 313, \\
  \qquad353, 389, 401, 409, 421, 449, 457, 461$,
  
  \item $r=41$,\\
  $p=13, 17, 29, 53, 89, 97, 101, 109, 137, 149, 157, 181, 193, 229, 233, \\
  \qquad257, 281, 293, 313, 317, 397, 421, 457$,
  
  \item $r=43$,\\
   $p=5, 29, 37, 61, 73, 89, 113, 137, 149, 157, 233, 241, 257, 277, 313, \\
   \qquad349, 373, 389, 409, 421, 433, 449, 457$,
   
   \item $r=47$,\\
   $p=5, 13, 29, 41, 73, 109, 113, 137, 181, 193, 229, 233, 257, 281, 293, \\
   \qquad313, 317, 349, 373, 389, 409, 421, 433, 449, 461$.
  
\end{itemize}

    

\color{black}
\section*{Acknowledgments}
We are grateful to Professor Tsuyoshi Miezaki 
for helpful discussions on this research. 
\color{black}




\begin{thebibliography}{999}

\bibitem{AK}
E.F.~Assmus and J.D.~Key,
{\sl Designs and their Codes},
Cambridge University Press, Cambridge, 1992.

\bibitem{assmus-mattson}
E.F.~Assmus,~Jr.~and H.F.~Mattson,~Jr., 
New $5$-designs, 
\emph{J. Combin. Theory} {\bf 6} (1969), 122--151.

\bibitem{AMMN}
M.~Awada, T.~Miezaki, A.~Munemasa, and H.~Nakasora, A note on $t$-designs in isodual codes, 
to appear in \emph{Finite Fields and Their Applications}.



\bibitem{Bachoc-2}
C.~Bachoc, 
Harmonic weight enumerators of nonbinary codes and
MacWilliams identities, 
{\sl Codes and association schemes} 
(Piscataway, NJ, 1999), 1-23, DIMACS Ser. Discrete Math. Theoret. Comput. Sci.,
56, Amer. Math. Soc., Providence, RI, 2001.


\bibitem{BR}
N.~Balachandran and D.~Ray-Chaudhuri, 
Simple 3-designs and PSL$(2,q)$ with $q\equiv 1 \pmod4$. 
\emph{Des.~Codes Cryptogr.} {\bf 44} (2007), no.~1-3, 263--274.




\bibitem{BJL}
T.~Beth, D.~Jungnickel, and H.~Lenz, 
{\sl Design Theory}, 
Cambridge University Press, Cambridge, 1999.


\bibitem{Blahut}
R.E.~Blahut, 
The Gleason-Prange theorem,
\emph{IEEE Trans.~Inform.~Theory} {\bf 37} (1991), no.~5, 1269--1273.


\bibitem{BS}
A.~Bonnecaze and P.~Sol\'e, 
The extended binary quadratic residue code of length 42 holds a 3-design,
\emph{J.~Combin.~Des.} {\bf 29} (2021), no.~8, 528--532. 

\bibitem{Magma}
W.~Bosma, J.~Cannon, and C.~Playoust.
The Magma algebra system.~I.\ The user language, 
\emph{J. Symb. Comp.} {\bf 24} (1997), 235--265.









\bibitem{DMS}
L. ~Dicuangco, P. ~Moree, and P.~Sol\'e,
The lengths of Hermitian Self-Dual Extended Duadic Code,
\emph{ J. Pure Appl. Algebra} {\bf 209} (2007), 223-237. 
\color{black}

\bibitem{DC}
C. ~Ding and C. ~Tang, 
{\sl Designs from Linear Codes}, Second edition, 
World Scientific, 2022.








\bibitem{E}
W.~Ebeling, 
{\sl Lattices and codes}, 
A course partially based on lectures by Friedrich
Hirzebruch, Third edition, 
Advanced Lectures in Mathematics, 
Springer Spektrum, Wiesbaden, 2013.

\bibitem{Huffman}
W.C.~Huffman, 
The automorphism groups of the generalized quadratic residue codes,
\emph{IEEE Trans.~Inform.~Theory} (1995), no.~2, 378--386. 


\bibitem{HP}
W.C.~Huffman and V.~Pless, 
{\sl Fundamentals of error-correcting codes},
Cambridge University Press, Cambridge, 2003.





\bibitem{I}
M.I.~Isaacs, 
{\sl Finite group theory}. 
Graduate Studies in Mathematics, 92. 
American Mathematical Society, Providence, RI, 2008. xii+350 pp. 

\bibitem{Ishikawa}
R.~Ishikawa, 
Exceptional designs in some 
extended quadratic residue codes, 
\emph{J.~Combin.~Des.} 31 (2023), no.~10, 496--510.






\bibitem{L}
J.H.~Lint, 
{\sl Introduction to Coding Theory,} 
Third edition. 
Springer Verlag, Berlin, 1999, xiv+227 pp.










\bibitem{Ozeki}
M.~Ozeki, 
On the notion of Jacobi polynomials for codes. 
\emph {Math.~Proc.~Cambridge Philos.~Soc.} {\bf 121} (1997), no.~1, 15--30. 





\color{black}

\bibitem{Tanabe}
K.~Tanabe, 
{A new proof of the Assmus-Mattson theorem for non-binary codes},
\emph{Des.~Codes Cryptogr.}
{\bf 22} (2001), no.~2, 149--155. 
\color{black}




\bibitem{Mathematica}
Wolfram Research, Inc., Mathematica, Version 11.2, Champaign, IL (2017).


\end{thebibliography}
\end{document}